\newdimen\plusheight
\def\+{\;\lower\plusheight\hbox{$+$}\;}
\newdimen\minusheight
\def\-{\;\lower\minusheight\hbox{$-$}\;}
\newdimen\cdotsheight
\def\cds{\lower\cdotsheight\hbox{$\cdots$}}
\renewcommand{\(}{\left\(}
\renewcommand{\)}{\right\)}
\renewcommand{\i}{\infty}
\theoremstyle{plain}
\newtheorem{theorem}{Theorem}[section]
\newtheorem{lemma}[theorem]{Lemma}
\newtheorem{corollary}[theorem]{Corollary}
\theoremstyle{definition}
\newtheorem{definition}[theorem]{Definition}
\theoremstyle{remark}
\newtheorem{remark}[theorem]{Remark}
\numberwithin{equation}{section}
\begin{document}
\title{Lauricella hypergeometric series $F_A^{(n)}$ over finite fields}
\author{Arjun Singh Chetry}
\address{Department of Science and Mathematics, Indian Institute of Information Technology Guwahati, Bongora, Assam-781015, India}
\email{achetry52@gmail.com}
\author{Gautam Kalita}
\address{Department of Science and Mathematics, Indian Institute of Information Technology Guwahati, Bongora, Assam-781015, India}
\email{gautam.kalita@iiitg.ac.in}
\subjclass[2010]{33C65, 11T24, 11L99.}

\keywords{Lauricella series, Finite fields, Multinomial coefficients, Generating function}

\begin{abstract}
In this paper, we develop a finite field analogue for one of the Lauricella series, $F^{(n)}_A$. Extending results of Greene, a finite field analogue for the multinomial coefficient is developed in order to express the Lauricella series in terms of binomial coefficients. We have further deduced certain transformation and reduction formulas for the Lauricella series $F^{(n)}_A$. Finally, we have obtained a number of generating functions for the Lauricella series $F^{(n)}_A$.
\end{abstract}
\maketitle
\section{Introduction}
In his famous paper presented to the Royal Society of Sciences at G$\ddot{o}$ttingen, Gauss \cite{gauss} introduced ${_2}F_1$-classical hypergeometric series. For complex numbers $a_i, b_j$ and $x$, with none of the $b_j$ being negative integers or zero, the classical hypergeometric series ${_{n+1}}F_n$ is defined as
\begin{align*}
_{n+1}F_n&\left[\begin{array}{cccc}
a_0, & a_1, &\ldots, & a_{n}\\
&b_1 , &\ldots, &b_n
\end{array}\vline~x \right]
:=\sum_{k=0}^{\infty}\frac{(a_0)_k\cdots(a_{n})_k}{(b_1)_k\cdots(b_n)_k}\frac{x^k}{k!},
\end{align*}
where the rising factorial $(a)_n$ is defined by $$(a)_0:=1~~~~~~\text{ and }~~~~~~(a)_k:=a(a+1)\cdots(a+k-1)~~~\text{for }k\geq 1.$$ The classical hypergeometric series satisfy many summation and transformation formulas. For details, see \cite{andrews,bailey}. The connection of the classical hypergeometric series with other number theoretical objects has been explored by many mathematician.
\par Throughout the paper, let $p$ be an odd prime and $\mathbb F_q$ denotes the finite field with $q=p^r$ elements, where $r\in\mathbb{N}$. A multiplicative character $\chi$ on $\mathbb{F}_q^\times$ is a group homomorphism $\chi:\mathbb{F}_q^\times\rightarrow\mathbb{C}$. We extend the domain of each $\chi$ on $\mathbb{F}_q^\times$ to $\mathbb{F}_q$ by setting $\chi(0) = 0$, and we denote the group of multiplicative characters on $\mathbb{F}_q$ by $\widehat{\mathbb F_q}$. For characters $A,B\in \widehat{\mathbb F_q}$, the binomial coefficient ${A \choose B}$ is defined as
\begin{align}\label{binomgreene}
{A \choose B}:=\frac{B(-1)}{q}J(A,\overline{B})=\frac{B(-1)}{q}\sum_{x \in \mathbb{F}_q}A(x)\overline{B}(1-x),
\end{align}
where $J(A, B)$ denotes the usual Jacobi sum and $\overline{B}$ is the inverse of $B$. The following properties of binomial coefficients is known from \cite{greene}
\begin{align}\label{bionomialproperty2}
\binom{A}{B}=\binom{A}{A\overline{B}},
\end{align}
\begin{align}\label{bionomialproperty1}
\binom{A}{B}=\binom{B\overline{A}}{B}B(-1),
\end{align}
\begin{align}\label{greenevar}
\overline{A}(1-x)&=\delta(x)+\frac{q}{q-1}\sum_{\chi}{A\chi\choose \chi}\chi(x),
\end{align}
where $\delta(x)=1$ $($respectively, $0)$ if $x= 0$ $($respectively, $x\neq 0)$. With these notations, for characters $A,B,C\in\widehat{\mathbb F_q}$ and $x\in\mathbb F_q$, Greene \cite{greene} defined the Gaussian hypergeometric series $_2F_1$ as
\begin{align}\label{eq02}
{_2F}_1\left[\begin{array}{cc}
A, & B\\
& C
\end{array}\mid x \right]:
=\epsilon(x)\frac{BC(-1)}{q}\sum_{y\in\mathbb{F}_q}B(y)\overline{B}C(1-y)\overline{A}(1-xy),
\end{align}
which is a finite field analogue for the integral representation of $_2F_1$-classical hypergeometric series
\begin{align*}
{_2F}_1\left[\begin{array}{cc}
a, & b\\
& c\end{array}\mid x \right]=\frac{\Gamma(c)}{\Gamma(b)\Gamma(c-b)}\int_0^1t^b(1-t)^{c-b}(1-tx)^{-a}\frac{dt}{t(1-t)}.
\end{align*}
Using \eqref{binomgreene}, Greene \cite[Theorem 3.6]{greene} expressed \eqref{eq02} as
\begin{align*}
{_{2}}F_1\left[\begin{array}{cc}
A, & B\\
& C
\end{array}\mid x \right]:
=\frac{q}{q-1}\sum_{\chi\in\widehat{\mathbb F_q}}{A\chi \choose \chi}{B\chi \choose C\chi}
\chi(x).
\end{align*}
Noting that
\begin{align*}
_{n+1}F_n&\left[\begin{array}{cccc}
a_0, & a_2, &\ldots, & a_{n}\\
&b_1 , &\ldots, &b_n
\end{array}\vline~x \right]
:=C\sum_{k=0}^{\infty}{a_0+k-1\choose k}{a_1+k-1\choose b_1+k-1}
\cdots{a_n+k-1\choose b_n+k-1}x^k,
\end{align*}
where $C=\{{a_1-1\choose b_1-1}\cdots{a_n-1\choose b_n-1}\}^{-1}$, 
Greene \cite{greene} defined the Gaussian hypergeometric series ${_{n+1}}F_n$ over $\mathbb{F}_q$ as
\begin{align*}
{_{n+1}}F_n\left[\begin{array}{cccc}
A_0, & A_1, & \ldots, & A_n\\
& B_1, & \ldots, & B_n
\end{array}\mid x \right]:
=\frac{q}{q-1}\sum_{\chi\in\widehat{\mathbb F_q}}{A_0\chi \choose \chi}{A_1\chi \choose B_1\chi}
\cdots {B_n\chi \choose B_n\chi}\chi(x),\notag
\end{align*}
where $A_0, A_1,\ldots, A_n, B_1, B_2,\ldots, B_n$ are multiplicative characters of $\mathbb{F}_q$.
\par Considering the product of two classical hypergeometric series of the form $_2F_1$, one can obtain double series. Among them, the Appell hypergeometric series $F_1,F_2,F_3$, and $F_4$ in two variables are of importance. For more details about Appell hypergeometric series, see \cite{andrews, bailey, schlosser}. Motivated by the work of Greene \cite{greene}, Li {\it et. al.} \cite{li} defined a finite field analogue for $F_1$ using its integral representation. Following this, He {\it et. al.} \cite{he} and He \cite{he2} gave finite field analogues for the Appell hypergeometric series $F_2$ and $F_3$, respectively, based on their integral representations. 
For example, He {\it et. al.} \cite{he} defined the finite field analogue for the Appell hypergeometric series
\begin{align*}
F_2&\left[\begin{array}{ccc}
a; & b, & b'\\
&c , &c'
\end{array}\vline~x,y \right]
:=\sum_{m,n\geq0}\frac{(a)_{m+n}(b)_m(b')_n}{m!n!(c)_m(c')_n}x^my^n,~|x|+|y|<1.
\end{align*}
as
\begin{align*}
F_2&\left[\begin{array}{ccc}
A; &B_1, &B_2\\
&C_1, &C_2
\end{array}\vline~x_1,x_2\right]\\
&=\epsilon(x_1x_2)B_1B_2C_1 C_2(-1)\sum_{t_1, t_2\in\mathbb{F}_q}B_1(t_1)B_2(t_2)\overline{B}_1C_1(1-t_1)\overline{B}_2C_2(1-t_2)\overline{A}(1-x_1t_1-x_2t_2).
\end{align*}
Recently, Tripathi {\it et. al.} \cite{tripathi} and Tripathi-Barman \cite{tripathi1} gave certain finite field analogues for all four Appell hypergeometric series in terms of Gauss sums, and deduced many reduction and transformation formulas for them.
\par In 1893, Lauricella \cite{lauricella} generalized all four Appell hypergeometric series, known as Lauricella series, into $n$-variables. He \cite{he1} deduced a finite field analogue for the Lauricella series $F_D^{(n)}$, and obtained certain transformation and reduction formulas together with several generating functions for them. Our main aim in this paper is to find a finite field analogue for another Lauricella series $F_A^{(n)}$, and deduce transformation and reduction formulas together with generating functions for the Lauricella series $F_A^{(n)}$ over finite fields. Lauricella \cite{lauricella} defined the Lauricella series $F_A^{(n)}$, generalization of $F_2$, as
\begin{align}\label{f2b}
F_A^{(n)}&\left[\begin{array}{cccc}
a; &b_1, &\ldots, &b_n\\
&c_1, &\ldots, &c_n
\end{array}\vline~x_1,\ldots,x_n\right]\notag\\
&=\sum_{m_1\geq0}\cdots\sum_{m_n\geq0}\frac{(a)_{m_1+\cdots+m_n}(b_1)_{m_1}\cdots(b_n)_{m_n}}{m!n!(c_1)_{m_1}\cdots(c_n)_{m_n}}x_1^{m_1}\cdots x_n^{m_n},~|x_1|+\cdots+|x_n|<1.
\end{align}
It is easy to see that $F_A^{(2)}=F_2$ and $F_A^{(1)}={_2}F_1$. 
The Lauricella series $F_A^{(n)}$ has an integral representation \cite[(24)]{hasanov}
\begin{align}\label{f2}
F_A^{(n)}&\left[\begin{array}{cccc}
a; &b_1, &\cdots, &b_n\\
&c_1, &\cdots, &c_n
\end{array}\vline~x_1,\cdots,x_n\right]=\frac{\Gamma(c_1)\cdots\Gamma(c_n)}{\Gamma(b_1)\cdots\Gamma(b_n)\Gamma(c_1-b_1)\cdots\Gamma(c_n-b_n)}\\
&\int_{0}^{1}\cdots\int_{0}^{1}t_1^{b_1-1}\cdots t_n^{b_n-1}(1-t_1)^{c_1-b_1-1}\cdots(1-t_n)^{c_n-b_n-1}(1-x_1t_1-\cdots -x_nt_n)^{-a}dt_1\cdots dt_n,\notag
\end{align}
where Re$(c_j)>$Re$(b_j)>0$ for all $j=1,\ldots,n$. Motivated by the works of \cite{greene, he, he1, he2, li}, we here first develop a finite field analogue for the Lauricella series $F_A^{(n)}$ based on the integral representation \eqref{f2}.
\begin{definition}\label{def1}
	For characters $A,B_1,\cdots,B_n,C_1,\cdots,C_n\in\widehat{\mathbb{F}_q}$ and $x_1,\cdots,x_n\in\mathbb F_q$, we define
	\begin{align*}
	F_A^{(n)}&\left[\begin{array}{cccc}
	A; &B_1, &\ldots, &B_n\\
	&C_1, &\ldots, &C_n
	\end{array}\vline~x_1,\ldots,x_n\right]\\
	&=\sum_{t_1,\ldots, t_n\in\mathbb{F}_q}\left(\prod_{i=1}^{n}\epsilon(x_i)\frac{B_iC_i(-1)}{q}B_i(t_i)\overline{B}_iC_i(1-t_i)\right)\overline{A}(1-x_1t_1-\cdots -x_nt_n).
	\end{align*}
\end{definition}
In the above definition, the normalized constant $\frac{\Gamma(c_1)\cdots\Gamma(c_n)}{\Gamma(b_1)\cdots\Gamma(b_n)\Gamma(c_1-b_1)\cdots\Gamma(c_n-b_n)}$ is dropped in order to produce simpler results, and the factor $\epsilon(x_1\cdots x_n)\frac{B_1\cdots B_nC_1\cdots C_n(-1)}{q^n}$ is introduced for a better expression of $F_A^{(n)}$ in terms of binomial coefficients.
\begin{remark}
For a permutation $\sigma:\lbrace 1,2,\cdots,n\rbrace\rightarrow\lbrace 1,2,\cdots,n\rbrace$, it follows easily from Definition \ref{def1} that
\small
\begin{align*}
F_A^{(n)}\left[\begin{array}{cccc}
A; &B_1, &\ldots, &B_n\\
&C_1, &\ldots, &C_n
\end{array}\vline~x_1,\ldots,x_n\right]=
F_A^{(n)}\left[\begin{array}{cccc}
A; &B_{\sigma(1)}, &\ldots, &B_{\sigma(n)}\\
&C_{\sigma(1)}, &\ldots, &C_{\sigma(n)}
\end{array}\vline~x_{\sigma(1)},\cdots,x_{\sigma(n)}\right].
\end{align*}
\end{remark}
\par Since the Lauricella series $F_A^{(n)}$ over finite fields is an $n$-variable extension of the finite field Appell series $F_2$, most of the results in this paper are generalizations of the results of \cite{he, ma}. However, we use Definition \ref{def1} to deduce these results whereas He {\it et. al.} \cite{he} used the binomial coefficient expression of $F_2$ to deduce them. It is also to note that 
our results shall be simpler compared to those proved by He {\it et. al.} \cite{he}.
\par The organization of this paper is as follows. In Section 2, we prove an expression for the Lauricella series $F_A^{(n)}$ over finite fields in terms of binomial coefficients. For this, we develop the concept of multinomial coefficients over finite fields, extending the work of Greene \cite{greene}.  We deduce certain reduction and transformation formulas for the Lauricella series $F_A^{(n)}$ over finite fields in Section 3. In Section 4, we find several generation functions for the Lauricella series $F_A^{(n)}$ over finite fields.
\section{Another Expression}
In this section, we give an expression of the finite field Lauricella series $F_A^{(n)}$ in terms of binomial coefficients.
\begin{theorem}\label{exbinom}
Let $A,B_1,\cdots,B_n,C_1,\cdots,C_n\in\widehat{\mathbb{F}_q}$ and $x_1,\cdots,x_n\in\mathbb F_q$, then
{\tiny
\begin{align*}
F_A^{(n)}&\left[\begin{array}{cccc}
A; &B_1, &\ldots, &B_n\\
   &C_1, &\ldots, &C_n
\end{array}\vline~x_1,\ldots,x_n\right]\\
&=\frac{q^n}{(q-1)^n}\sum_{\chi_1,\ldots,\chi_n\in\widehat{\mathbb F_q}}{A\chi_1\choose\chi_1}{A\chi_1\chi_2\choose\chi_2}\cdots{A\chi_1\chi_2\cdots\chi_n\choose\chi_n}{B_1\chi_1\choose C_1\chi_1}{B_2\chi_2\choose C_1\chi_2}\cdots{B_n\chi_n\choose C_n\chi_n}\chi_1(x_1)\cdots\chi_n(x_n).
\end{align*}}
\end{theorem}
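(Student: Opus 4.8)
The plan is to work directly from Definition~\ref{def1} and to resolve the single entangled factor $\overline{A}(1-x_1t_1-\cdots-x_nt_n)$ into a product of characters in the separate variables $x_it_i$, after which the $n$-fold sum over $t_1,\dots,t_n$ factors completely. The engine is Greene's expansion \eqref{greenevar}, applied $n$ times. First I would prove, by induction on $n$, a multinomial refinement of \eqref{greenevar},
\begin{align*}
\overline{A}(1-u_1-\cdots-u_n)=\left(\frac{q}{q-1}\right)^{n}\sum_{\chi_1,\dots,\chi_n}\binom{A\chi_1}{\chi_1}\binom{A\chi_1\chi_2}{\chi_2}\cdots\binom{A\chi_1\cdots\chi_n}{\chi_n}\chi_1(u_1)\cdots\chi_n(u_n)+\cdots,
\end{align*}
where the trailing dots denote correction terms. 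The inductive step peels off one variable: writing $y:=1-u_2-\cdots-u_n$ and using multiplicativity $\overline{A}(y-u_1)=\overline{A}(y)\,\overline{A}(1-u_1/y)$ on the locus $y\neq 0$, an application of \eqref{greenevar} in the variable $u_1/y$ produces the factor $\binom{A\chi_1}{\chi_1}\chi_1(u_1)$ together with $\overline{A}(y)\,\overline{\chi_1}(y)=\overline{A\chi_1}(1-u_2-\cdots-u_n)$. The latter is exactly the $(n-1)$-variable problem with $A$ replaced by $A\chi_1$, and iterating yields the telescoping product, which is the finite field multinomial coefficient advertised in the introduction.

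Next I would set $u_i=x_it_i$ and substitute this expansion into Definition~\ref{def1}. The factor $\prod_i\epsilon(x_i)$ lets me assume every $x_i\neq 0$, so $\chi_i(x_it_i)=\chi_i(x_i)\chi_i(t_i)$ and the sum over $(t_1,\dots,t_n)$ splits into a product of one-variable sums. Each such sum is a Jacobi sum,
\begin{align*}
\frac{B_iC_i(-1)}{q}\sum_{t_i}B_i\chi_i(t_i)\,\overline{B}_iC_i(1-t_i)=\frac{B_iC_i(-1)}{q}J(B_i\chi_i,\overline{B}_iC_i),
\end{align*}
which I convert to a binomial coefficient by \eqref{binomgreene}, obtaining $\binom{B_i\chi_i}{B_i\overline{C}_i}$ after clearing the sign factor $B_iC_i(-1)\,B_i\overline{C}_i(-1)=1$, and then $\binom{B_i\chi_i}{C_i\chi_i}$ by \eqref{bionomialproperty2}. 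Multiplying the $n$ factors against the telescoping product and the characters $\chi_i(x_i)$ reproduces the claimed right-hand side; the case $n=1$ is precisely Greene's $_2F_1$ evaluation recorded just after \eqref{eq02}, which serves as the base of the induction.

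The main obstacle is the control of the correction terms, and it is the step I would scrutinize most carefully. Two kinds arise. The $\delta$-contributions coming from the $\delta(x)$ in \eqref{greenevar} are benign: after $u_i=x_it_i$ each such term localizes the relevant $t_i$-sum at $t_i=0$, where the weight $B_i(t_i)=B_i(0)=0$ annihilates it. The genuinely delicate terms are the boundary contributions created because the factorization $\overline{A}(y-u_1)=\overline{A}(y)\,\overline{A}(1-u_1/y)$ is valid only away from the zero locus of the partial sum $y$; tracking these across all $n$ peeling steps, and deciding whether their total contribution against the weights $\prod_i B_i(t_i)\overline{B}_iC_i(1-t_i)$ really cancels, is the crux. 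I would organize this by summing \eqref{greenevar} at argument $1$ to record the vanishing relations $\sum_{\chi}\binom{A\chi}{\chi}=0$ and their twists, which pin down the value of the na\"ive expansion on each boundary locus, and then verify that the residual boundary sums vanish. The permutation symmetry noted after Definition~\ref{def1} gives a useful consistency check on this bookkeeping, since the final answer must be independent of the order in which the variables are peeled off.
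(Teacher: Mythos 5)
Your main-term computation is right and matches the paper's: the telescoping product of binomials, the factorization of the $t_i$-sums into Jacobi sums, the conversion $\binom{B_i\chi_i}{B_i\overline{C}_i}=\binom{B_i\chi_i}{C_i\chi_i}$, and the treatment of the $\delta$-terms via $B_i(0)=0$ are all correct. But the proof has a genuine gap at exactly the point you defer: you never verify that the residual boundary sums vanish, and in fact they do not. Carry out your own bookkeeping for $n=2$. The exact form of your peeling identity is
\begin{align*}
\overline{A}(1-u_1-u_2)&=\overline{A}(1-u_2)\delta(u_1)+\frac{q}{q-1}\,\delta(u_2)\sum_{\chi_1}\binom{A\chi_1}{\chi_1}\chi_1(u_1)\\
&\quad+\frac{q^2}{(q-1)^2}\sum_{\chi_1,\chi_2}\binom{A\chi_1}{\chi_1}\binom{A\chi_1\chi_2}{\chi_2}\chi_1(u_1)\chi_2(u_2)+\overline{A}(-u_1)\,\delta(1-u_2),
\end{align*}
where exactness on the locus $u_2=1$, $u_1\neq 0$ holds precisely because of the vanishing relation $\sum_{\chi}\binom{B\chi}{\chi}=0$ you propose to use: there the double sum gives $0$, while the true value is $\overline{A}(-u_1)$, which is what the last (boundary) term records. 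Now substitute $u_i=x_it_i$ and sum against the weights of Definition \ref{def1}: the two $\delta$-terms die against $B_i(0)=0$, the double sum produces exactly the right-hand side of Theorem \ref{exbinom}, and the boundary term contributes, from the single point $t_2=x_2^{-1}$ and a full sum over $t_1$,
\begin{align*}
\frac{B_1C_1B_2C_2(-1)}{q^2}\,\overline{A}(-x_1)\,J(\overline{A}B_1,\overline{B}_1C_1)\,\overline{C}_2(x_2)\,\overline{B}_2C_2(x_2-1),
\end{align*}
which is nonzero for every $x_1\neq 0$ and $x_2\neq 0,1$, whatever the characters are, since a Jacobi sum of two characters never vanishes. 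So the boundary contributions cannot cancel, and no amount of bookkeeping will make them cancel: completing your argument honestly shows that $F_A^{(2)}$ equals the asserted double sum \emph{plus} a nonzero correction, i.e.\ your method cannot prove the statement because the statement, as printed, is not an identity.

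It is worth seeing why this clashes with the paper. The paper avoids boundary terms by using the exact multivariate expansion \eqref{exbinomeq1}, whose correction terms are pure products of $\delta$'s and hence harmless; the cost is paid instead in Lemma \ref{defeq3}, which asserts the Jacobi-sum factorization $J(A,\overline{B}_1,\overline{B}_2)=J(A,\overline{B}_1)J(A\overline{B}_1,\overline{B}_2)$ for \emph{all} characters. That factorization fails whenever a partial product of the characters is trivial: for $A\neq\epsilon$ and $B_2\neq\epsilon$ one computes $\binom{A}{A,B_2}=\frac{B_2(-1)}{q}$, whereas $\binom{A}{A}\binom{A\overline{A}}{B_2}=\frac{B_2(-1)}{q^2}$; the change of variables in the paper's proof of Lemma \ref{defeq3} silently discards the points where partial sums of the $c_i$ vanish. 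Since the sum in Theorem \ref{exbinom} runs over all $\chi_1,\chi_2$, it inevitably passes through the edge case $\chi_1=\overline{A}$, and the accumulated failure of Lemma \ref{defeq3} there is exactly your boundary term. So your instinct to scrutinize the boundary locus identified the real issue — your peeling identity is the correct (exact) one, and the paper's route is the flawed one — but the correct conclusion is that both Theorem \ref{exbinom} and Lemma \ref{defeq3} need additional correction terms, not that the boundary sums vanish.
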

For $n=2$, Theorem \ref{exbinom} gives a simple expression for the Appell series $F_A^{(2)}$ over finite fields. 
\begin{corollary}
Let $A,B,B',C,C'\in\widehat{\mathbb{F}_q}$ and $x,y\in\mathbb F_q$, then
\begin{align*}
F_A^{(2)}\left[\begin{array}{ccc}
A; &B, &B'\\
&C, &C'
\end{array}\vline~x,y\right]=\frac{q^2}{(q-1)^2}\sum_{\chi,\lambda\in\widehat{\mathbb F_q}}{A\chi\choose\chi}{A\chi\lambda\choose\lambda}{B\chi\choose C\chi}{B'\lambda\choose C'\lambda}\chi(x)\lambda(y).
\end{align*}
\end{corollary}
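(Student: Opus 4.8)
The corollary is nothing more than the case $n=2$ of Theorem~\ref{exbinom}, so my plan is to obtain it by direct specialization rather than by a fresh argument. Setting $n=2$ in the theorem and relabelling $B_1=B$, $B_2=B'$, $C_1=C$, $C_2=C'$, $x_1=x$, $x_2=y$, together with the summation variables $\chi_1=\chi$ and $\chi_2=\lambda$, the left-hand side becomes the $F_A^{(2)}$ of the corollary and the right-hand side becomes $\frac{q^2}{(q-1)^2}\sum_{\chi,\lambda}\binom{A\chi}{\chi}\binom{A\chi\lambda}{\lambda}\binom{B\chi}{C\chi}\binom{B'\lambda}{C'\lambda}\chi(x)\lambda(y)$. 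No use of the permutation symmetry remark or of Definition~\ref{def1} is needed; the statement is a literal instance of the general formula.

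One point to record is that the product displayed in Theorem~\ref{exbinom} contains the factor $\binom{B_2\chi_2}{C_1\chi_2}$, in which the subscript on $C$ is an evident misprint: the first and last factors $\binom{B_1\chi_1}{C_1\chi_1}$ and $\binom{B_n\chi_n}{C_n\chi_n}$ show that the $i$-th factor is $\binom{B_i\chi_i}{C_i\chi_i}$, so the second factor should read $\binom{B_2\chi_2}{C_2\chi_2}$. With this correction the $n=2$ term matches $\binom{B'\lambda}{C'\lambda}$ in the corollary exactly, which is the only thing that has to be checked.

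Should a self-contained derivation be preferred, I would instead run the two-variable version of the argument behind Theorem~\ref{exbinom} starting from Definition~\ref{def1}. First I would expand $\overline{A}(1-xt_1-yt_2)$ by iterating \eqref{greenevar} twice: writing $\overline{A}((1-yt_2)-xt_1)=\overline{A}(1-yt_2)\,\overline{A}\!\left(1-\tfrac{xt_1}{1-yt_2}\right)$ when $1-yt_2\neq0$ and applying \eqref{greenevar} to the second factor produces a sum over $\chi$ with coefficient $\binom{A\chi}{\chi}$, after which $\overline{A}(1-yt_2)\overline{\chi}(1-yt_2)=\overline{A\chi}(1-yt_2)$ and a second application of \eqref{greenevar} produces a sum over $\lambda$ with coefficient $\binom{A\chi\lambda}{\lambda}$. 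Substituting this expansion, interchanging the character and $t_i$ sums, and factoring gives the two Jacobi sums $J(B\chi,\overline{B}C)$ and $J(B'\lambda,\overline{B'}C')$ from the $t_1$- and $t_2$-summations; converting each through \eqref{binomgreene} and \eqref{bionomialproperty2} yields $\binom{B\chi}{C\chi}$ and $\binom{B'\lambda}{C'\lambda}$, and the prefactors $\tfrac{BC(-1)}{q}$, $\tfrac{B'C'(-1)}{q}$ collapse since $C(-1)=\overline{C}(-1)$. The only delicate point in this route is bookkeeping the $\delta$-terms and the excluded locus $1-yt_2=0$ generated by the two applications of \eqref{greenevar}; these must be shown to contribute nothing once the $\epsilon(x),\epsilon(y)$ factors and the vanishing of $\chi(0),\lambda(0)$ for nontrivial characters are taken into account, and this is where essentially all the real work lies.
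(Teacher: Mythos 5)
Your proposal is correct and matches the paper exactly: the paper offers no separate proof of this corollary, presenting it as the immediate $n=2$ specialization of Theorem~\ref{exbinom}, which is precisely your first paragraph (and your observation that $\binom{B_2\chi_2}{C_1\chi_2}$ in the theorem is a misprint for $\binom{B_2\chi_2}{C_2\chi_2}$ is also correct). The alternative self-contained derivation you sketch is unnecessary here, though it does mirror the paper's own proof technique for the general theorem.
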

To prove Theorem \ref{binomgreene}, we follow the work of Greene \cite{greene} to establish certain results concerning multinomial coefficients over finite fields.
\par Let $\chi_1,\ldots,\chi_n\in\widehat{\mathbb F_q}$. Any function $f:\mathbb F_q^n\rightarrow\mathbb C$ has a unique representation
{\tiny
\begin{align}\label{exbinomeq1}
f(x_1,\ldots,x_n)&=f_\delta\delta(x_1)\cdots\delta(x_n)+ 
\sum_{k=1}^{n}\left(\sum_{\substack{r_1,\ldots,r_k=1\\1\leq r_1<\cdots<r_k\leq n}}
^{n}\delta_{r_1\cdots r_k}
\sum_{\chi_1,\cdots,\chi_k\in\widehat{\mathbb F_q}}f_{\chi_{r_1}\cdots\chi_{r_k}}\chi_1(x_{r_1})\cdots\chi_k(x_{r_k})\right),
\end{align}}
where
$f_\delta=f(0,\ldots,0);~~\delta(x)=\left\{
\begin{array}{ll}
\displaystyle 1 & \hbox{if $x=0$;}\\
0 & \hbox{otherwise;}
\end{array}
\right.
\displaystyle\delta_{r_1,\ldots,r_k}=\prod_{\substack{i=1\\
		i\neq r_1,\cdots,r_k}}^{n}\delta(x_i),
	$ for $k=1,2,\ldots, (n-1)$; $\delta_{r_1,\ldots,r_n}=1$;
and $
\displaystyle f_{\chi_{r_1}\cdots\chi_{r_k}}=\frac{1}{(q-1)^k}\sum_{t_1,\cdots, t_k\in\mathbb F_q}f(0,\ldots,0,t_{r_1},0,\ldots,0,t_{r_k},0,\ldots,0)\overline{\chi}_1(t_{r_1})\cdots\overline{\chi}_k(t_{r_k}),\\
$
with $t_{r_i}$ is at the $r_i$th position in the tuple.
\par In \cite[Definition  5.18]{lidl}, for $\lambda_1,\ldots,\lambda_k\in\widehat{\mathbb F_q}$, the {\it multiple}-Jacobi sum is defined as
\begin{align*}
J(\lambda_1,\ldots,\lambda_k)&=\sum_{\substack{c_1,\ldots,c_k\in\mathbb{F}_q\\c_1+\cdots+c_k=1}}\lambda_1(c_1)\cdots\lambda_k(c_k)\\
&=\sum_{\substack{c_2,\ldots,c_k\in\mathbb{F}_q}}\lambda_1(1+c_2+\cdots+c_k)\lambda_2(-c_2)\cdots\lambda_k(-c_k).
\end{align*}
\subsection{Multinomial coefficients and multinomial theorem}
The multinomial theorem, a generalization of the binomial theorem, states that
\begin{equation}\label{multieq1}
(1+x_1+\cdots+x_n)^m=\sum_{k_1,\ldots,k_n\geq0}{m\choose k_1,\cdots,k_{n+1}}x_1^{k_2}\cdots x_n^{k_{n+1}},
\end{equation}
where $k_1+\cdots+k_{n+1}=m$ and ${m\choose k_1,\ldots,k_{n+1}}$ is defined as
$${m\choose k_1,\cdots,k_{n+1}}=\frac{m!}{k_1!\cdots k_n!k_{n+1}!}.$$
We now establish a finite field analogue of \eqref{multieq1}.
\begin{lemma}\label{multilemm1}
	For any character $A$ of $\mathbb F_q$ and $x_1,\ldots,x_n\in\mathbb F_q$, we have
	{\tiny
	\begin{align*}
	 A(1+x_1+\cdots+x_n)&=\delta(x_1)\cdots\delta(x_n)
	 +\sum_{k=1}^{n}\left(\frac{1}{(q-1)^k}\sum_{\substack{r_1,\ldots,r_k=1\\1\leq r_1<\cdots<r_k\leq n}}
	^{n}\delta_{r_1\cdots r_k}
	\sum_{\chi_1\cdots\chi_r}J(A,\overline{\chi}_1,\ldots,\overline{\chi}_k)\chi_1(-x_{r_1})\cdots\chi_k(-x_{r_k})\right).
	\end{align*}}
\end{lemma}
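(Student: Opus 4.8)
The plan is to apply the unique representation \eqref{exbinomeq1} to the specific function $f(x_1,\ldots,x_n)=A(1+x_1+\cdots+x_n)$ and to match the two resulting pieces against the two groups of terms in the claimed identity. Since \eqref{exbinomeq1} represents \emph{every} function $\mathbb{F}_q^n\to\mathbb{C}$ in that form, all that remains is to evaluate the coefficients $f_\delta$ and $f_{\chi_{r_1}\cdots\chi_{r_k}}$ for this particular $f$ and to recognize them as multiple-Jacobi sums.

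First I would compute the constant term $f_\delta=f(0,\ldots,0)=A(1)=1$, using that any multiplicative character satisfies $A(1)=1$. This reproduces exactly the leading term $\delta(x_1)\cdots\delta(x_n)$ of the lemma. Next, fix indices $1\le r_1<\cdots<r_k\le n$. Setting every variable in a position other than $r_1,\ldots,r_k$ equal to $0$ collapses $f$ to $A(1+t_{r_1}+\cdots+t_{r_k})$, so the defining formula for the coefficient becomes
\begin{align*}
f_{\chi_{r_1}\cdots\chi_{r_k}}=\frac{1}{(q-1)^k}\sum_{t_{r_1},\ldots,t_{r_k}\in\mathbb{F}_q}A(1+t_{r_1}+\cdots+t_{r_k})\,\overline{\chi}_1(t_{r_1})\cdots\overline{\chi}_k(t_{r_k}).
\end{align*}

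Comparing this with the second expression for the multiple-Jacobi sum of the $k+1$ characters $A,\overline{\chi}_1,\ldots,\overline{\chi}_k$, namely
\begin{align*}
J(A,\overline{\chi}_1,\ldots,\overline{\chi}_k)=\sum_{t_{r_1},\ldots,t_{r_k}\in\mathbb{F}_q}A(1+t_{r_1}+\cdots+t_{r_k})\,\overline{\chi}_1(-t_{r_1})\cdots\overline{\chi}_k(-t_{r_k}),
\end{align*}
the only discrepancy is the sign inside each character argument. Pulling out $\overline{\chi}_j(-1)=\chi_j(-1)$ from each factor and using that $\chi_j(-1)=\pm1$ is its own inverse, I obtain $f_{\chi_{r_1}\cdots\chi_{r_k}}=\frac{\chi_1(-1)\cdots\chi_k(-1)}{(q-1)^k}\,J(A,\overline{\chi}_1,\ldots,\overline{\chi}_k)$.

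Finally I would substitute these coefficients back into \eqref{exbinomeq1}. For each index set the factor $\chi_1(-1)\cdots\chi_k(-1)$ combines with $\chi_1(x_{r_1})\cdots\chi_k(x_{r_k})$ to give exactly $\chi_1(-x_{r_1})\cdots\chi_k(-x_{r_k})$, which yields the stated formula after inserting the overall $\frac{1}{(q-1)^k}$ and summing over $k$ and over the index sets. The computation is essentially bookkeeping; the one place to be careful---and the main, if minor, obstacle---is tracking the index conventions of \eqref{exbinomeq1}, where the dummy characters $\chi_1,\ldots,\chi_k$ are attached to the selected positions $r_1,\ldots,r_k$ in increasing order, so that each sign $\chi_j(-1)$ is paired with the correct variable $x_{r_j}$ and the collapse $\overline{\chi}_j(-1)\chi_j(x_{r_j})=\chi_j(-x_{r_j})$ is applied consistently across all $\binom{n}{k}$ index sets and all $k$ from $1$ to $n$.
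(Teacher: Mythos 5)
Your proposal is correct and follows essentially the same route as the paper: apply the unique expansion \eqref{exbinomeq1} to $f(x_1,\ldots,x_n)=A(1+x_1+\cdots+x_n)$, compute $f_\delta=1$ and $f_{\chi_{r_1}\cdots\chi_{r_k}}=\frac{\chi_1\cdots\chi_k(-1)}{(q-1)^k}J(A,\overline{\chi}_1,\ldots,\overline{\chi}_k)$ via the second form of the multiple-Jacobi sum, then absorb the signs into $\chi_j(-x_{r_j})$. Your write-up is in fact slightly more careful than the paper's, which leaves the sign extraction implicit and contains a typo ($(q-1)^n$ where $(q-1)^k$ is meant) that your version gets right.
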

\begin{proof}
Let $f(x_1,\cdots,x_n)=A(1+x_1+\cdots+x_n)$. Then we have
	\begin{align*}
	f_\delta&=A(1)=1,
	\end{align*}
	and
	\begin{align*}
	f_{\chi_{r_1}\cdots\chi_{r_k}}&=\frac{1}{(q-1)^k}\sum_{t_1,\ldots, t_k\in\mathbb F_q}A(1+t_1+\cdots+t_k)\overline{\chi}_1(t_1)\cdots\overline{\chi}_k(t_k)\\
	&=\frac{\chi_1\cdots\chi_k(-1)}{(q-1)^n}J(A,\overline{\chi}_1,\ldots,\overline{\chi}_k)
	\end{align*}
	for all $k=1,2,\ldots, n$.
Thus the result follows directly from \eqref{exbinomeq1}.
\end{proof}
It is easy to see from \eqref{multieq1} and Lemma \ref{multilemm1} that the finite field analogue for the multinomial coefficients is the {\it multiple}-Jacobi sum. This leads to the following definition.
\begin{definition}\label{def2}
	Let $A,B_1,\ldots,B_n\in\widehat{\mathbb F_q}$, define ${A\choose B_1\cdots B_n}$ as
	$${A\choose B_1,\ldots,B_n}:=\frac{B_1\cdots B_n(-1)}{q^n}J(A,\overline{B}_1,\cdots,\overline{B}_n).$$
\end{definition}
Thus in terms of multinomial coefficients, Theorem \ref{multilemm1} can be rewritten as
{\tiny
\begin{align}\label{defeq1}
A(1+x_1+\cdots+x_n)&=\delta(x_1)\cdots\delta(x_n)
+\sum_{k=1}^{n}\left(\frac{q^k}{(q-1)^k}\sum_{\substack{r_1,\ldots,r_k=1\\1\leq r_1<\cdots<r_k\leq n}}
^{n}\delta_{r_1\cdots r_k}
\sum_{\chi_1,\ldots,\chi_r\in\widehat{\mathbb F_q}}{A\choose \chi_1,\cdots,\chi_k}\chi_1(x_{r_1})\cdots\chi_k(x_{r_k})\right).
\end{align}}
The multinomial co-efficient is related to the binomial co-efficient as
\begin{align}\label{defeq2}
{m\choose k_1,\ldots,k_n}&=\frac{m!}{k_1!\cdots k_n!(m-k_1-\cdots-k_{n})!}={m\choose k_1}{m-k_1\choose k_1}\cdots{m-k_1-\cdots-k_{n-1}\choose k_n}.
\end{align}
The following result is a finite field analogue of \eqref{defeq2}.
\begin{lemma}\label{defeq3}
For $A,B_1,\ldots,B_n\in\widehat{\mathbb F_q}$, we have
\begin{align}
{A\choose B_1,\ldots,B_n}&={A\choose B_1}{A\overline{B}_1\choose B_2}{A\overline{B_1B_2}\choose B_3}\cdots{A\overline{B_1\cdots B_{n-1}}\choose B_n}\notag \\&
=B_1\cdots B_n(-1){\overline{A}B_1\choose B_1}{\overline{A}B_1B_2\choose B_2}
\cdots{\overline{A}B_1\cdots B_n\choose B_n}.\notag
\end{align}
\end{lemma}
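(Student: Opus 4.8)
The plan is to reduce the multiple-Jacobi sum $J(A,\overline{B}_1,\ldots,\overline{B}_n)$ sitting inside Definition \ref{def2} to an ordered product of ordinary Jacobi sums, and then to convert each factor into a binomial coefficient through \eqref{binomgreene}. The engine of the argument is a peeling identity: for characters $\lambda_1,\ldots,\lambda_k$ with $\lambda_1\cdots\lambda_{k-1}\neq\epsilon$,
\begin{align*}
J(\lambda_1,\ldots,\lambda_k)=J(\lambda_1,\ldots,\lambda_{k-1})\,J(\lambda_1\cdots\lambda_{k-1},\lambda_k).
\end{align*}
I would prove this by setting $u=c_1+\cdots+c_{k-1}$ and splitting the defining sum according to the value of $u$. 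For $u\neq0$ the substitution $c_i=u\,d_i$ factors out $\lambda_1\cdots\lambda_{k-1}(u)$ and leaves $J(\lambda_1,\ldots,\lambda_{k-1})$, while the $u=0$ contribution is a sum that is invariant under scaling all variables by any $t\neq0$ up to the factor $\lambda_1\cdots\lambda_{k-1}(t)$, hence vanishes precisely because $\lambda_1\cdots\lambda_{k-1}\neq\epsilon$. Summing the surviving $u\neq0$ part against $\lambda_k(c_k)$ over $u+c_k=1$ reassembles $J(\lambda_1\cdots\lambda_{k-1},\lambda_k)$.

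Iterating this identity by induction on $k$ gives the full factorization whenever every partial product $\lambda_1\cdots\lambda_j$ with $2\le j\le k-1$ is nontrivial. Specializing to $\lambda_1=A$ and $\lambda_{i+1}=\overline{B}_i$ yields
\begin{align*}
J(A,\overline{B}_1,\ldots,\overline{B}_n)=J(A,\overline{B}_1)\,J(A\overline{B}_1,\overline{B}_2)\cdots J(A\overline{B_1\cdots B_{n-1}},\overline{B}_n).
\end{align*}
I would then invoke \eqref{binomgreene} in the rearranged form $J(D,\overline{B})=q\,B(-1)\binom{D}{B}$ to rewrite the $i$-th factor as $q\,B_i(-1)\binom{A\overline{B_1\cdots B_{i-1}}}{B_i}$. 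Substituting into Definition \ref{def2} and using $\big(B_1\cdots B_n(-1)\big)^2=1$, the prefactor $\frac{B_1\cdots B_n(-1)}{q^n}$ exactly cancels the accumulated $q^n$ and sign, leaving the first claimed product.

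For the second equality I would apply the known symmetry \eqref{bionomialproperty1}, namely $\binom{D}{B}=\binom{B\overline{D}}{B}B(-1)$, to each factor $\binom{A\overline{B_1\cdots B_{i-1}}}{B_i}$. This rewrites it as $B_i(-1)\binom{\overline{A}B_1\cdots B_i}{B_i}$, and collecting the $n$ individual signs $B_i(-1)$ into the single factor $B_1\cdots B_n(-1)$ produces the second displayed formula verbatim.

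The main obstacle is the degeneracy hypothesis underlying the peeling identity, i.e. the vanishing of the $u=0$ term, which fails exactly when some partial product $A\overline{B_1\cdots B_{j-1}}$ equals $\epsilon$ (equivalently $A=B_1\cdots B_{j-1}$). In these boundary configurations the $u=0$ sum contributes an extra $(q-1)$-type term and the clean factorization is disturbed. I would therefore dispose of these cases separately, either by evaluating the relevant Jacobi sums directly via their standard special values (for instance $J(\chi,\overline{\chi})=-\chi(-1)$ when $\chi\neq\epsilon$) or, more uniformly, by passing to Gauss sums through $J(\lambda_1,\ldots,\lambda_k)=g(\lambda_1)\cdots g(\lambda_k)/g(\lambda_1\cdots\lambda_k)$ and carefully tracking the exceptional Gauss-sum evaluations. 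Reconciling every degenerate configuration with the stated identity is the delicate bookkeeping, whereas the generic telescoping is entirely routine.
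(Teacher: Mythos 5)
Your approach is genuinely different from the paper's. You dismantle the multiple Jacobi sum from the left by the peeling identity $J(\lambda_1,\ldots,\lambda_k)=J(\lambda_1,\ldots,\lambda_{k-1})\,J(\lambda_1\cdots\lambda_{k-1},\lambda_k)$, whereas the paper runs the computation in the opposite direction: it expands the product of binomial coefficients via \eqref{binomgreene} into an $n$-fold sum over $(t_1,\ldots,t_n)$ and identifies that sum with $J(A,\overline{B}_1,\ldots,\overline{B}_n)$ through the substitution $c_0=t_1\cdots t_n$, $c_j=(1-t_j)t_{j+1}\cdots t_n$, $c_n=1-t_n$, justified by the telescoping relation $t_1\cdots t_n+(1-t_1)t_2\cdots t_n+\cdots+(1-t_n)=1$ and Definition \ref{def2}. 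Your scaling argument for the $u=0$ stratum is correct, so in the generic case, i.e.\ when $A\neq B_1\cdots B_j$ for every $1\le j\le n-1$, your factorization, the conversion $J(D,\overline{B})=q\,B(-1)\binom{D}{B}$, and the final application of \eqref{bionomialproperty1} all go through; there your proof is complete and, unlike the paper's, makes the needed hypothesis visible.

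However, the degenerate cases are a genuine gap that cannot be closed by ``delicate bookkeeping'': the stated identity is simply false there, so no evaluation of exceptional Gauss or Jacobi sums will reconcile them. Concretely, take $n=2$ and $A=B_1=B_2=\epsilon$ (recall that $\epsilon(0)=0$ in this paper's convention). Then $J(\epsilon,\epsilon,\epsilon)$ counts the points of $c_0+c_1+c_2=1$ with all $c_i\neq0$, namely $(q-1)+(q-2)^2$, while $J(\epsilon,\epsilon)=q-2$, so
\begin{align*}
\binom{\epsilon}{\epsilon,\epsilon}-\binom{\epsilon}{\epsilon}\binom{\epsilon}{\epsilon}
=\frac{(q-1)+(q-2)^2}{q^2}-\frac{(q-2)^2}{q^2}=\frac{q-1}{q^2}\neq 0;
\end{align*}
more generally, for $n=2$ and any $A=B_1$ the two sides differ by $(q-1)B_2(-1)/q^2$. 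You should also note that the paper's own proof conceals exactly the same lacuna: its change of variables is injective on the support of the summand but not surjective onto $\{c_0+\cdots+c_n=1,\ \text{all } c_i\neq 0\}$; the missed points are those with some partial sum $c_0+\cdots+c_j=0$ $(1\le j\le n-1)$, and by the very scaling argument you used, those strata contribute $0$ precisely when $A\neq B_1\cdots B_j$ and contribute terms of size $q-1$ otherwise. So the correct resolution is not to absorb the degenerate configurations but to add the hypothesis $A\neq B_1\cdots B_j$ for $1\le j\le n-1$ to the lemma (or correction terms to its conclusion); under that hypothesis your argument is a complete proof, and arguably a more transparent one than the paper's.
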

\begin{proof}
Using \eqref{bionomialproperty1} and \eqref{binomgreene}, we have
\begin{align*}
{\overline{A}B_1\choose B_1}{\overline{A}B_1B_2\choose B_2}
\cdots{\overline{A}B_1\cdots B_n\choose B_n}&=B_1\cdots B_n(-1){A\choose B_1}
{A\overline{B}_1\choose B_2}{A\overline{B_1B_2}\choose B_3}\cdots{A\overline{B_1\cdots B_{n-1}}\choose B_n}\\
&=\frac{1}{q^n}
\sum_{t_1\in\mathbb{F}_q}A(t_1)\overline{B_1}(1-t_1)\sum_{t_2\in\mathbb{F}_q}A\overline{B_1}(t_2)\overline{B_2}(1-t_2)\\
&\hspace{2cm}\cdots\sum_{t_n}A\overline{B_1\cdots B_{n-1}}(t_n)\overline{B_n}(1-t_n)\\
&=\frac{1}{q^n}
\sum_{t_1,\ldots,t_n\in\mathbb{F}_q}A(t_1\cdots t_n)\overline{B_1}((1-t_1)t_2\cdots t_n)\\
&\hspace{1.6cm}\overline{B_2}((1-t_2)t_3\cdots t_n)\cdots\overline{B_{n-1}}((1-t_{n-1})t_n)\overline{B_n}(1-t_n).
\end{align*}
Noting that $t_1\cdots t_n+(1-t_1)t_2\cdots t_n+\cdots+(1-t_{n-1})t_n+(1-t_n)=1$, the result follows from Definition \ref{def2}.
\end{proof}
\noindent\textbf{Proof of Theorem \ref{exbinom}}. Using Lemma \eqref{defeq3} in \eqref{defeq1}, 
we have
\begin{align*}
&\overline{A}(1-x_1t_1-\cdots-x_nt_n)
=\delta(x_1t_1)\cdots\delta(x_nt_n)+\\
&\sum_{k=1}^{n}\left(\frac{q^k}{(q-1)^k}\sum_{\substack{r_1,\ldots,r_k=1\\1\leq r_1<\cdots<r_k\leq n}}
^{n}\delta_{r_1\cdots r_k}
\sum_{\chi_1,\ldots,\chi_r\in\widehat{\mathbb F_q}}{A\chi_1\choose \chi_1}{A\chi_1\chi_2\choose \chi_2}\cdots{A\chi_1\cdots\chi_k\choose \chi_k}\chi_1(x_{r_1}t_{r_1})\cdots\chi_k(x_{r_k}t_{r_k})\right),
\end{align*}
where $$\delta_{r_1,\ldots,r_k}=\prod_{\substack{i=1\\i\neq r_1,\ldots,r_k}}^{n}\delta(x_it_i).$$ Since $$\epsilon(x_1,\ldots, x_n)\delta_{r_1,\ldots,r_k}\prod_{i=1}^{n}B_i(t_i)=0,$$ Definition \ref{def1} yields
\begin{align*}
F_A^{(n)}&\left[\begin{array}{cccc}
A; &B_1, &\ldots, &B_n\\
&C_1, &\ldots, &C_n
\end{array}\vline~x_1,\ldots,x_n\right]\\
&=\frac{q^n}{(q-1)^n}\sum_{t_1,\ldots, t_n\in\mathbb{F}_q}\left(\prod_{i=1}^{n}\epsilon(x_i)\frac{B_iC_i(-1)}{q}B_i(t_i) \overline{B}_iC_i(1-t_i)\right)\\
&\hspace{3cm}\sum_{\chi_1,\ldots,\chi_n\in\widehat{\mathbb F_q}}{A\chi_1\choose \chi_1}{A\chi_1\chi_2\choose \chi_2}\cdots{A\chi_1\cdots\chi_n\choose \chi_n}\chi_1(x_{1}t_{1})\cdots\chi_n(x_{n}t_{n})\\
&=\frac{1}{(q-1)^n}\sum_{\chi_1,\ldots,\chi_n\in\widehat{\mathbb F_q}}\sum_{t_1,\ldots, t_n\in\mathbb{F}_q}\left(\prod_{i=1}^{n}B_iC_i(-1)B_i\chi_i(t_i)
\overline{B}_iC_i(1-t_i)
\right)\\
&\hspace{4cm}{A\chi_1\choose \chi_1}{A\chi_1\chi_2\choose \chi_2}\cdots{A\chi_1\cdots\chi_k\choose \chi_k}\chi_1(x_1)\cdots\chi_n(x_n)\\
&=\frac{q^n}{(q-1)^n}\sum_{\chi_1\cdots\chi_n}{A\chi_1\choose \chi_1}{A\chi_1\chi_2\choose \chi_2}\cdots{A\chi_1\cdots\chi_k\choose \chi_k}{B_1\chi_1\choose \overline{C}_1B_1}\cdots {B_n\chi_n\choose \overline{C}_nB_n}\chi_1(x_1)\cdots\chi_n(x_n),
\end{align*}
where the last equality follows due to \eqref{binomgreene}. Using \eqref{bionomialproperty2}, we complete the proof of the result.\hfill$\Box$
\section{Reduction and Transformation formulas}
In this section, we deduce certain reduction and transformation formulas for the Lauricella series $F_A^{(n)}$. We first extend some results from \cite{brychkov} of Appell series $F_2$ to $F_A^{(n)}$ in the following lemma.
	\begin{lemma}\label{maid}
	For $a,b_1,\ldots,b_n,c_1,\ldots,c_n,x_1,\ldots,x_n,t\in\mathbb C$, the following identities hold.
	{\tiny
		\begin{align*}
	&(i)~F_A^{(n)}\left[\begin{array}{cccc}
	a; &b_1, &\ldots, &b_n\\
	&c_1, &\cdots, &c_n
	\end{array}\vline~x_1,\ldots,x_n\right]\\
	 &\hspace{.5cm}=\frac{\Gamma(c_k)}{\Gamma(b_k)\Gamma(c_k-b_k)}\int_{0}^{1}t_k^{b_k-1}(1-t_k)^{c_k-b_k-1}(1-x_kt_k)^{-a}\\
	&\hspace{1.5cm} F_A^{(n-1)}\left[\begin{array}{ccccccc}
	a; &b_1, &\ldots, &b_{k-1}, &b_{k+1}, &\cdots, &b_n\\
	&c_1, &\ldots, &c_{k-1}, &c_{k+1}, &\ldots, &c_n
	\end{array}\vline~\frac{x_1}{1-x_kt_k},\ldots,\frac{x_{k-1}}{1-x_kt_k},\frac{x_{k+1}}{1-x_kt_k} \cdots,\frac{x_n}{1-x_kt_k}\right]dt_k.\\
	&(ii)~F_A^{(n)}\left[\begin{array}{cccc}
	a; &b_1, &\ldots, &b_n\\
	&c_1, &\ldots, &c_n
	\end{array}\vline~x_1,\ldots,x_n\right]\\
	 &\hspace{.5cm}=\frac{\Gamma(c_k)}{\Gamma(b_k)\Gamma(c_k-b_k)}\left(\frac{1}{x_k}\right)^{c_k-1}\int_{1}^{\frac{1}{1-x_k}}t_k^{a-c_k}(t_k-1)^{b_k-1}(1-t_k+x_kt_k)^{c_k-b_k-1}\\
	&\hspace{1.5cm} F_A^{(n-1)}\left[\begin{array}{ccccccc}
	a; &b_1, &\ldots, &b_{k-1}, &b_{k+1}, &\cdots, &b_n\\
	&c_1, &\ldots, &c_{k-1}, &c_{k+1}, &\cdots, &c_n
	\end{array}\vline~x_1t_k,\ldots,x_{k-1}t_k,x_{k+1}t_k \cdots,x_nt_k\right]dt_k.\\
	&(iii)~F_A^{(n)}\left[\begin{array}{cccc}
	a; &b_1, &\ldots, &b_n\\
	&c_1, &\ldots, &c_n
	\end{array}\vline~x_1t,\ldots x_{k-1}t,1-\frac{t}{x_k},x_{k+1}t,\ldots,x_nt\right]\\
	&\hspace{.5cm}=\frac{\Gamma(c_k)}{\Gamma(b_k)\Gamma(c_k-b_k)}t^{c_k-b_k-a}x_k^{b_k}(x_k-t)^{1-c_k}\int_{t}^{x_k}t_k^{a-c_k}(t_k-t)^{b_k-1}(x_k-t_k)^{c_k-b_k-1}\\
&\hspace{1.5cm} F_A^{(n-1)}\left[\begin{array}{ccccccc}
	a; &b_1, &\ldots, &b_{k-1}, &b_{k+1}, &\ldots, &b_n\\
	&c_1, &\ldots, &c_{k-1}, &c_{k+1}, &\ldots, &c_n
	\end{array}\vline~x_1t_k,\ldots,x_{k-1}t_k,x_{k+1}t_k \ldots,x_nt_k\right]dt_k.
	\end{align*}}
\end{lemma}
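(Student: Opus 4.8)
The plan is to derive all three identities from the Euler-type integral representation \eqref{f2}, in each case reducing the $n$-fold integral defining $F_A^{(n)}$ to a single integral in the variable $t_k$ whose integrand carries a copy of $F_A^{(n-1)}$; parts $(ii)$ and $(iii)$ will then follow from $(i)$ by one-dimensional changes of variable.

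For $(i)$, I would single out $t_k$ in \eqref{f2} and push the remaining $n-1$ integrations inside by Fubini. The only factor coupling $t_k$ to the other variables is $(1-x_1t_1-\cdots-x_nt_n)^{-a}$, which I would rewrite as
\begin{equation*}
(1-x_1t_1-\cdots-x_nt_n)^{-a}=(1-x_kt_k)^{-a}\left(1-\sum_{j\neq k}\frac{x_j}{1-x_kt_k}\,t_j\right)^{-a}.
\end{equation*}
With this factorization, the inner integral over $(t_j)_{j\neq k}$ is, up to the normalizing quotient of Gamma functions, precisely the integral representation \eqref{f2} of $F_A^{(n-1)}$ evaluated at the arguments $x_j/(1-x_kt_k)$. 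Matching the $\Gamma$-constants of $F_A^{(n)}$ against those absorbed into $F_A^{(n-1)}$ leaves exactly the factor $\Gamma(c_k)/\left(\Gamma(b_k)\Gamma(c_k-b_k)\right)$ in front of the surviving $t_k$-integral, which is the assertion of $(i)$.

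For $(ii)$, I would start from $(i)$ and apply the substitution $s=1/(1-x_kt_k)$, equivalently $t_k=(s-1)/(x_ks)$, in the remaining $t_k$-integral. This carries the limits $0,1$ to $1,1/(1-x_k)$, turns the inner arguments $x_j/(1-x_kt_k)$ into $x_js$, and—after collecting the resulting powers of $s$, of $s-1$, of $1-s+x_ks$, together with the Jacobian $dt_k=ds/(x_ks^2)$—reproduces the integrand $s^{a-c_k}(s-1)^{b_k-1}(1-s+x_ks)^{c_k-b_k-1}$ and the prefactor $(1/x_k)^{c_k-1}$; renaming $s$ as $t_k$ yields $(ii)$. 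For $(iii)$, I would apply $(ii)$ with the arguments specialized to $x_jt$ for $j\neq k$ and to $1-t/x_k$ in the $k$-th slot, so that the role of $x_k$ in $(ii)$ is played by $1-t/x_k$: the upper limit becomes $x_k/t$, the factor $1-\tau+x_k\tau$ becomes $1-\tau t/x_k$, and the prefactor becomes $x_k^{c_k-1}(x_k-t)^{1-c_k}$. A second substitution $\tau=t_k/t$ then converts the inner arguments $x_jt\tau$ into $x_jt_k$, moves the limits to $t,x_k$, and—after again gathering the powers of $t$ and of $x_k$ thrown off by the change of variable—produces the prefactor $t^{c_k-b_k-a}x_k^{b_k}(x_k-t)^{1-c_k}$ and the integrand $t_k^{a-c_k}(t_k-t)^{b_k-1}(x_k-t_k)^{c_k-b_k-1}$ displayed in $(iii)$.

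The conceptual input is light: everything rests on the single factorization of the linear form $1-\sum_jx_jt_j$ used in $(i)$ and on two explicit one-dimensional substitutions in $(ii)$ and $(iii)$. The main point to watch is the bookkeeping of the fractional exponents and the Jacobians—specifically, checking that the stray powers of $x_k$ and of $t$ produced by each substitution recombine into exactly the stated prefactors—and ensuring that the hypotheses $\mathrm{Re}(c_j)>\mathrm{Re}(b_j)>0$ keep every integral absolutely convergent, so that the use of Fubini and the interchange of the $F_A^{(n-1)}$-integral with the $t_k$-integral are legitimate. I do not expect a genuine obstacle here; once the two substitutions are fixed, the identities reduce to routine verifications of the exponent arithmetic.
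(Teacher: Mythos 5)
Your proposal is correct and follows essentially the same route as the paper: part $(i)$ by isolating the $t_k$-integration in \eqref{f2} and factoring $(1-x_1t_1-\cdots-x_nt_n)^{-a}=(1-x_kt_k)^{-a}\bigl(1-\sum_{j\neq k}x_jt_j/(1-x_kt_k)\bigr)^{-a}$, part $(ii)$ by the substitution $t_k=(s-1)/(x_ks)$ (the paper's $t_k\rightarrow\frac{t_k-1}{x_kt_k}$), and part $(iii)$ by specializing the arguments and rescaling the integration variable by $t$, exactly as in the paper. Your write-up is in fact more explicit than the paper's about the Jacobian bookkeeping and the convergence hypotheses justifying Fubini.
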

\begin{proof}
	$(i)$ Changing the order of integration slightly in \eqref{f2}, we obtain
		\begin{align*}
F_A^{(n)}&\left[\begin{array}{cccc}
	a; &b_1, &\ldots, &b_n\\
	&c_1, &\ldots, &c_n
	\end{array}\vline~x_1,\ldots,x_n\right]\\ &=\frac{\Gamma(c_1)\cdots\Gamma(c_n)}{\Gamma(b_1)\cdots\Gamma(b_n)\Gamma(c_1-b_1)\cdots\Gamma(c_n-b_n)}\int_{0}^{1}t_k^{b_k-1}(1-t_k)^{c_k-b_k-1}(1-x_kt_k)^{-a}\\
	&\hspace{.2cm}\int_0^1\cdots\int_{0}^{1}t_1^{b_1-1}\cdots t_n^{b_n-1}(1-t_1)^{c_1-b_1-1}\cdots(1-t_n)^{c_n-b_n-1}\left(1-\frac{x_1t_1}{1-x_kt_k}-\cdots -\frac{x_nt_n}{1-x_kt_k}\right)^{-a}dt_1\cdots dt_n dt_k.
		\end{align*}
\noindent Hence the result follows.\\
	$(ii)$ Changing the variables $t_k\rightarrow\frac{t_k-1}{x_kt_k}$ in $(i)$, we obtain the desired result.\\
	$(iii)$ We first change the variables $x_j\rightarrow x_jt$ for all $j\neq k$, $x_k\rightarrow1-\frac{t}{x_k}$, followed by $t_k\rightarrow\frac{t_k}{t}$ in $(ii)$, the result follows.
\end{proof}
The finite field analogue of the above identities are as follows.
\begin{theorem}\label{thmma2.1}
	For $n\geq2$, let $A,B_1,\ldots,B_n,C_1,\ldots,C_n\in\widehat{\mathbb{F}_q}$ and $x_1,\ldots,x_n\in\mathbb F_q$. If $k\neq l$ such that $x_l$ is nonzero, then
	\small
	\begin{align*}
	F_A^{(n)}&\left[\begin{array}{cccc}
	A; &B_1, &\ldots, &B_n\\
	&C_1, &\ldots, &C_n
	\end{array}\vline~x_1,\ldots,x_n\right]\\
	&=\frac{B_kC_k(-1)}{q}\overline{B}_kC_k(x_k-1)\overline{C}_k(x_k)\overline{A}(-x_l)C_l(-1)\\
	&\hspace{.3cm} F_A^{(n-1)}\left[\begin{array}{cccccc}
	A; &B_1, &\ldots &\overline{B}_{l}C_l, &\ldots , &B_n\\
	&C_1, &\ldots, &C_{l}, &\ldots, &C_n
	\end{array}\vline~-\frac{x_1}{x_l},\ldots,-\frac{x_{l-1}}{x_l},1,-\frac{x_{l+1}}{x_l},\ldots,-\frac{x_n}{x_l}\right]\\
	&\hspace{.3cm}+\frac{B_kC_k(-1)}{q}\sum_{t_k\neq x_k^{-1}}B_k(t_k)\overline{B}_kC_k(1-t_k)\overline{A}(1-x_kt_k)\\
	&\hspace{.3cm} F_A^{(n-1)}\left[\begin{array}{ccccccc}
	A; &B_1, &\ldots &B_{l-1}, &B_{l+1}, &\ldots , &B_n\\
	&C_1, &\ldots, &C_{l-1}, &C_{l+1}, &\ldots, &C_n
	\end{array}\vline~\frac{x_1}{1-x_kt_k},\ldots,\frac{x_{k-1}}{1-x_kt_k},\frac{x_{k+1}}{1-x_kt_k},\ldots,\frac{x_n}{1-x_kt_k}\right].
	\end{align*}
\end{theorem}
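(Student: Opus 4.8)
The plan is to reproduce, at the level of character sums, the reduction of Lemma \ref{maid}$(i)$: I separate the summation variable $t_k$ out of the defining sum in Definition \ref{def1} and recognize what remains as an $F_A^{(n-1)}$. The genuinely new feature over the classical integral is that the point $t_k=x_k^{-1}$, where $1-x_kt_k$ vanishes, carries positive mass in a finite sum, so it must be extracted as a separate boundary contribution. This boundary term is precisely what yields the first of the two displayed summands, while the range $t_k\neq x_k^{-1}$ yields the second.

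First I would write, using Definition \ref{def1},
\begin{align*}
F_A^{(n)}[\,\cdots\,]=\frac{B_kC_k(-1)}{q}\sum_{t_k\in\mathbb{F}_q}\epsilon(x_k)\,B_k(t_k)\,\overline{B}_kC_k(1-t_k)\,\Sigma(t_k),
\end{align*}
where $\Sigma(t_k)$ denotes the inner sum over $\{t_i:i\neq k\}$ of the remaining factors times $\overline{A}(1-x_kt_k-\sum_{i\neq k}x_it_i)$, and then split the $t_k$-sum as $\{t_k\neq x_k^{-1}\}\cup\{t_k=x_k^{-1}\}$; under the nonvanishing hypotheses the factors $\epsilon(x_i)$ are all $1$. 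On the range $t_k\neq x_k^{-1}$ one has $1-x_kt_k\neq0$, so multiplicativity of the character gives $\overline{A}(1-x_kt_k-\sum_{i\neq k}x_it_i)=\overline{A}(1-x_kt_k)\,\overline{A}\bigl(1-\sum_{i\neq k}\tfrac{x_i}{1-x_kt_k}t_i\bigr)$. Pulling $\overline{A}(1-x_kt_k)$ out of $\Sigma$, the surviving sum over $\{t_i:i\neq k\}$ is, after matching the trivial factors via $\epsilon\bigl(x_i/(1-x_kt_k)\bigr)=\epsilon(x_i)$, exactly the defining sum of $F_A^{(n-1)}$ in the variables $x_i/(1-x_kt_k)$. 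This reproduces the second displayed term verbatim.

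The boundary term $t_k=x_k^{-1}$ is the crux. Evaluating its scalar factor with $B_k(x_k^{-1})=\overline{B}_k(x_k)$ and $\overline{B}_kC_k(1-x_k^{-1})=\overline{B}_kC_k(x_k-1)\,B_k\overline{C}_k(x_k)$ collapses it to $\frac{B_kC_k(-1)}{q}\overline{B}_kC_k(x_k-1)\overline{C}_k(x_k)$, which is precisely the prefactor of the first term, while $1-x_kt_k=0$ turns the last character into $\overline{A}(-\sum_{i\neq k}x_it_i)$. To recover an $F_A^{(n-1)}$ I single out $t_l$ (this is where $x_l\neq0$ is used), write $-\sum_{i\neq k}x_it_i=-x_l\bigl(t_l+\sum_{i\neq k,l}\tfrac{x_i}{x_l}t_i\bigr)$, pull out $\overline{A}(-x_l)$, and apply the reflection $t_l\mapsto1-t_l$. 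This substitution simultaneously converts $\overline{A}\bigl(t_l+\cdots\bigr)$ into $\overline{A}\bigl(1-t_l+\sum_{i\neq k,l}\tfrac{x_i}{x_l}t_i\bigr)$ and swaps $B_l(t_l)\overline{B}_lC_l(1-t_l)$ into $\overline{B}_lC_l(t_l)\,B_l(1-t_l)$, i.e.\ it flips the $l$-th numerator parameter from $B_l$ to $\overline{B}_lC_l$ while leaving $C_l$ fixed. Read against Definition \ref{def1}, what remains is $F_A^{(n-1)}$ with arguments $-x_i/x_l$ for $i\neq k,l$ and $1$ in slot $l$; collecting the stray scalars and using $C_l(-1)^2=1$ together with $B_l(-1)=\overline{B}_l(-1)$ reconciles the constants to the leftover factor $\overline{A}(-x_l)C_l(-1)$, giving the first term.

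The main obstacle is this boundary contribution. One must both evaluate its scalar factor correctly and, above all, carry out the reflection $t_l\mapsto1-t_l$ that manufactures the constant $1$ inside $\overline{A}$ and effects the parameter change $B_l\to\overline{B}_lC_l$; the careful bookkeeping of the quadratic characters $C_l(-1),B_l(-1)$ and the essential use of $x_l\neq0$ (needed to form $-x_i/x_l$ and to keep every $\epsilon$-factor equal to $1$) are the delicate points, while the remaining manipulations are routine character-sum algebra mirroring Lemma \ref{maid}.
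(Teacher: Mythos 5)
Your proposal is correct and follows essentially the same route as the paper's proof: the paper likewise splits the defining sum into the part with $x_kt_k=1$ and the part with $x_kt_k\neq 1$, factors $\overline{A}(1-x_kt_k)$ out of the latter, and handles the former by the substitution $t_l=1-t_l'$ (your reflection $t_l\mapsto 1-t_l$), pulling out $\overline{A}(-x_l)$ and obtaining the same prefactor $\overline{B}_kC_k(x_k-1)\overline{C}_k(x_k)$ and the parameter flip $B_l\to\overline{B}_lC_l$ with the leftover $C_l(-1)$. Your bookkeeping of the boundary term and of the constants matches the paper's computation of $P$ and $Q$ in all essentials.
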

\begin{proof} For some fixed $k$ with $1\leq k\leq n$, we divide the series into two sums	
	\begin{align}\label{3.1}
	F_A^{(n)}\left[\begin{array}{cccc}
	A; &B_1, &\ldots, &B_n\\
	&C_1, &\ldots, &C_n
	\end{array}\vline~x_1,\ldots,x_n\right]=&\epsilon(x_1\cdots x_n)\frac{B_1\cdots B_nC_1\cdots C_n(-1)}{q^n}\left(\sum_{\substack{x_kt_k=1\\t_i\in\mathbb F_q}}+\sum_{\substack{x_kt_k\neq1\\t_i\in\mathbb F_q}}\right)\notag\\
	:=&P+Q.
	\end{align}
We now evaluate each expression separately. We choose $l$ with $1\leq l\leq n$ such that $l\neq k$ and $x_l\neq 0$. Substituting $t_l=1-t_l'$ in the expression of $P$, we obtain
{\tiny	\begin{align*}
	P&=\sum_{t_1,\ldots, t_n\in\mathbb{F}_q}\left(\prod_{\substack{i=1\\i\neq k}}^{n}\epsilon(x_i)\frac{B_iC_i(-1)}{q}B_i(t_i)\overline{B}_iC_i(1-t_i)\right)\frac{B_kC_k(-1)}{q}B_k(x_k^{-1})\overline{B}_kC_k(1-x_k^{-1})\overline{A}(-x_1t_1-\cdots-x_lt_l-\cdots-x_nt_n)\\
	&=\sum_{t_1,\ldots, t_n\in\mathbb{F}_q}\left(\prod_{\substack{i=1\\i\neq k,l}}^{n}\epsilon(x_i)\frac{B_iC_i(-1)}{q}B_i(t_i)\overline{B}_iC_i(1-t_i)\right)\frac{B_kC_kB_lC_l(-1)}{q}\overline{B}_kC_k(x_k-1)\overline{C}_k(x_k)B_l(1-t_l')\overline{B}_lC_l(t_l')\\
	&\hspace{7cm}\overline{A}\left(-x_1t_1-\cdots-x_l+x_lt_l'-\cdots-x_nt_n\right)\\
	&=\sum_{t_1,\ldots, t_n\in\mathbb{F}_q}\left(\prod_{\substack{i=1\\i\neq k,l}}^{n}\epsilon(x_i)\frac{B_iC_i(-1)}{q}B_i(t_i)\overline{B}_iC_i(1-t_i)\right)\frac{B_kC_kB_lC_l(-1)}{q}\overline{B}_kC_k(x_k-1)\overline{C}_k(x_k)B_l(1-t_l')\overline{B}_lC_l(t_l')\overline{A}(-x_l)\\
	&\hspace{5cm}\overline{A}\left(1-t_l'+ \frac{x_1t_1}{x_l}-\cdots+\frac{x_{l-1}t_{l-1}}{x_l}+\frac{x_{l+1}t_{l+1}}{x_l}+\cdots+\frac{x_nt_n}{x_l}\right)\\
	&=\frac{B_kC_k(-1)}{q}\overline{B}_kC_k(x_k-1)\overline{C}_k(x_k)\overline{A}(-x_l)C_l(-1)F_A^{(n-1)}\left[\begin{array}{cccccc}
	A; &B_1, &\ldots, &\overline{B}_{l}C_l, &\ldots , &B_n\\
	&C_1, &\ldots, &C_{l}, &\ldots, &C_n
	\end{array}\vline~-\frac{x_1}{x_l},\ldots,-\frac{x_{l-1}}{x_l},1,-\frac{x_{l+1}}{x_l},\ldots,-\frac{x_n}{x_l}\right].
	\end{align*}
{\large In the similar way,}
	\begin{align*}
	Q=&\sum_{t_1,\cdots, t_n\in\mathbb{F}_q}\left(\prod_{i=1}^{n}\epsilon(x_i)\frac{B_iC_i(-1)}{q}B_i(t_i)\overline{B}_iC_i(1-t_i)\right)\overline{A}(1-x_kt_k)\overline{A}\left(1-\frac{x_1t_1}{1-x_kt_k}-\cdots-\frac{x_{k-1}t_{k-1}}{1-x_kt_k}-
	\frac{x_{k+1}t_{k+1}}{1-x_kt_k}\cdots-\frac{x_nt_n}{1-x_kt_k}\right)\\
	=&\frac{B_kC_k(-1)}{q}\sum_{t_k\neq x_k^{-1}}B_k(t_k)\overline{B}_kC_k(1-t_k)\overline{A}(1-x_kt_k)\sum_{\substack{t_1,\cdots,t_{k-1},t_{k+1},\ldots, t_n\in\mathbb{F}_q}}\left(\prod_{\substack{i=1\\i\neq k}}^{n}\epsilon(x_i)\frac{B_iC_i(-1)}{q}B_i(t_i)\overline{B}_iC_i(1-t_i)\right)\\
	&\hspace{5cm}\overline{A}\left(1-\frac{x_1t_1}{1-x_kt_k}-\cdots-\frac{x_{k-1}t_{k-1}}{1-x_kt_k}-
	\frac{x_{k+1}t_{k+1}}{1-x_kt_k}\cdots-\frac{x_nt_n}{1-x_kt_k}\right)\\
	=&\frac{B_kC_k(-1)}{q}\sum_{t_k\neq x_k^{-1}}B_k(t_k)\overline{B}_kC_k(1-t_k)\overline{A}(1-x_kt_k)\\
	&\hspace{1cm} F_A^{(n-1)}\left[\begin{array}{ccccccc}
	A; &B_1, &\ldots &B_{l-1}, &B_{l+1}, &\ldots , &B_n\\
	&C_1, &\ldots, &C_{l-1}, &C_{l+1}, &\ldots, &C_n
	\end{array}\vline~\frac{x_1}{1-x_kt_k},\ldots,\frac{x_{k-1}}{1-x_kt_k},\frac{x_{k+1}}{1-x_kt_k}\ldots\frac{x_n}{1-x_kt_k}\right].
	\end{align*}
{\large As a result, we obtain the desired result from \eqref{3.1}.}}
\end{proof}
If we use the change of variables of the proofs of Lemma \ref{maid} $(ii)$ and $(iii)$ in Theorem \ref{thmma2.1}, we have the following results.
\begin{theorem}\label{thmma2.2}
	For $n\geq2$, let $A,B_1,\ldots,B_n,C_1,\ldots,C_n\in\widehat{\mathbb{F}_q}$ and $x_1,\ldots,x_n\in\mathbb F_q$. If $k\neq l$ and $x_l\neq 0$, then
	{\tiny
	\begin{align*}
	F_A^{(n)}&\left[\begin{array}{cccc}
	A; &B_1, &\ldots, &B_n\\
	&C_1, &\ldots, &C_n
	\end{array}\vline~x_1,\ldots,x_n\right]\\
	&=\frac{B_kC_k(-1)}{q}\overline{B}_kC_k(x_k-1)\overline{C}_k(x_k)\overline{A}(-x_l)C_l(-1)\\
	&\hspace{1cm}F_A^{(n-1)}\left[\begin{array}{cccccc}
	A; &B_1, &\ldots, &\overline{B}_{l}C_l, &\ldots , &B_n\\
	&C_1, &\ldots, &C_{l}, &\ldots, &C_n
	\end{array}\vline~-\frac{x_1}{x_l},\ldots,-\frac{x_{l-1}}{x_l},1,-\frac{x_{l+1}}{x_l},\ldots,-\frac{x_n}{x_l}\right]\\
	&\hspace{1.2cm}+\frac{B_kC_k(-1)\overline{C}_k(x_k)}{q}\sum_{t_k}A\overline{C}_k(t_k)B_k(t_k-1)\overline{B}_kC_k(1-t_k+x_kt_k)\\
	&\hspace{1.4cm}F_A^{(n-1)}\left[\begin{array}{ccccccc}
	A; &B_1, &\ldots &B_{l-1}, &B_{l+1}, &\ldots , &B_n\\
	&C_1, &\ldots, &C_{l-1}, &C_{l+1}, &\ldots, &C_n
	\end{array}\vline~x_1t_k,\ldots,x_{k-1}t_k,x_{k+1}t_k,\ldots,x_nt_k\right].
	\end{align*}}
\end{theorem}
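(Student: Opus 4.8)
The plan is to build directly on Theorem \ref{thmma2.1} rather than re-expand Definition \ref{def1} from scratch. In the proof of Theorem \ref{thmma2.1} the series was split as $F_A^{(n)}=P+Q$ according to whether $x_kt_k=1$ or $x_kt_k\neq1$, and the first summand $P$ was shown to equal
$$\frac{B_kC_k(-1)}{q}\overline{B}_kC_k(x_k-1)\overline{C}_k(x_k)\overline{A}(-x_l)C_l(-1)\,F_A^{(n-1)}[\cdots].$$
This is \emph{verbatim} the first term of Theorem \ref{thmma2.2}, so the $P$-part requires no further work and the whole task reduces to rewriting the second summand, which in that proof reads
$$Q=\frac{B_kC_k(-1)}{q}\sum_{s\neq x_k^{-1}}B_k(s)\,\overline{B}_kC_k(1-s)\,\overline{A}(1-x_ks)\,F_A^{(n-1)}\Big[\cdots\Big|\tfrac{x_j}{1-x_ks}\Big],$$
into the second term of the statement. (If $x_k=0$ the factor $\overline{C}_k(x_k)$ forces both sides to vanish, so I may assume $x_k\neq0$ throughout.)

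The key step is to import, now as a substitution over $\mathbb{F}_q$, the change of variables used to pass from Lemma \ref{maid}(i) to Lemma \ref{maid}(ii). I set $s=\frac{t_k-1}{x_kt_k}$ with $t_k$ the new variable; the inverse relation $t_k=(1-x_ks)^{-1}$ shows that $s\mapsto t_k$ is a bijection of $\mathbb{F}_q\setminus\{x_k^{-1}\}$ onto $\mathbb{F}_q^{\times}$, and since the transformed summand carries a factor $A\overline{C}_k(t_k)$ that vanishes at $t_k=0$, the resulting sum may harmlessly be extended over all of $\mathbb{F}_q$. Under this substitution the elementary identities $1-x_ks=t_k^{-1}$, $\;1-s=\frac{1-t_k+x_kt_k}{x_kt_k}$, and $\frac{x_j}{1-x_ks}=x_jt_k$ transform the arguments of $F_A^{(n-1)}$ from $\frac{x_j}{1-x_ks}$ into $x_jt_k$ and turn $\overline{A}(1-x_ks)$ into $A(t_k)$, so the inner $(n-1)$-variable series already takes the form appearing in the statement.

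It then remains to track the multiplicative characters. Expanding $B_k(s)=B_k(t_k-1)\,\overline{B}_k(x_k)\,\overline{B}_k(t_k)$ and $\overline{B}_kC_k(1-s)=\overline{B}_kC_k(1-t_k+x_kt_k)\,B_k\overline{C}_k(x_k)\,B_k\overline{C}_k(t_k)$ and regrouping, the $x_k$-dependent characters collapse via $\overline{B}_k(x_k)\cdot B_k\overline{C}_k(x_k)=\overline{C}_k(x_k)$ to the single prefactor $\overline{C}_k(x_k)$, while the $t_k$-dependent characters collapse via $\overline{B}_k(t_k)\cdot B_k\overline{C}_k(t_k)\cdot A(t_k)=A\overline{C}_k(t_k)$. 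This produces exactly the factor $A\overline{C}_k(t_k)\,B_k(t_k-1)\,\overline{B}_kC_k(1-t_k+x_kt_k)$ of the statement, so $Q$ becomes the claimed second term, and $P+Q$ yields the theorem. I expect the only delicate point to be this character bookkeeping—in particular, confirming that the $x_k$-powers cancel cleanly to give $\overline{C}_k(x_k)$ and that the substitution is a genuine bijection on the stated domain; everything else is direct substitution.
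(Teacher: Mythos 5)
Your proposal is correct and follows exactly the route the paper intends: the paper's entire proof of Theorem \ref{thmma2.2} is the one-line remark that one applies the change of variables from Lemma \ref{maid}$(ii)$, namely $t_k\rightarrow\frac{t_k-1}{x_kt_k}$, to the second summand $Q$ in the proof of Theorem \ref{thmma2.1}, which is precisely what you do. Your writeup in fact supplies the details the paper leaves implicit (the bijection $\mathbb{F}_q\setminus\{x_k^{-1}\}\rightarrow\mathbb{F}_q^{\times}$, the character bookkeeping collapsing to $\overline{C}_k(x_k)$ and $A\overline{C}_k(t_k)$, and the degenerate case $x_k=0$), all of which check out.
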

\begin{theorem}\label{thmma2.3}
	For $n\geq2$, let $A,B_1,\ldots,B_n,C_1,\ldots,C_n\in\widehat{\mathbb{F}_q}$ and $x_1,\ldots,x_n,t\in\mathbb F_q$. If $k\neq l$ and $x_l\neq 0$, then
	{\tiny
	\begin{align*}
	F_A^{(n)}&\left[\begin{array}{cccc}
	A; &B_1, &\ldots, &B_n\\
	&C_1, &\ldots, &C_n
	\end{array}\vline~x_1t,\ldots x_{k-1}t,1-\frac{t}{x_k},x_{k+1}t,\ldots,x_nt\right]\\
	&=\frac{\overline{A}B_kC_kC_l(-1)}{q}\overline{A}(x_lt)B_k(x_k)\overline{B}_k(x_k-t)\\
	&\hspace{1cm} F_A^{(n-1)}\left[\begin{array}{cccccc}
	A; &B_1, &\cdots &\overline{B}_{l}C_l, &\cdots , &B_n\\
	&C_1, &\cdots, &C_{l}, &\cdots, &C_n
	\end{array}\vline~-\frac{x_1t}{x_l},\cdots,-\frac{x_{l-1}t}{x_l},1,-\frac{x_{l+1}t}{x_l},\cdots,-\frac{x_nt}{x_l}\right]\\
	&\hspace{1.2cm}+\frac{B_kC_k(-1)}{q}\overline{C}_k(x_k-t)B_k(x_k)\overline{AB}_kC_k(t)\sum_{t_k}A\overline{C}_k(t_k)B_k(t_k-t)\overline{B}_kC_k(x_k-t_k)\\
	& \hspace{1.4cm}F_A^{(n-1)}\left[\begin{array}{ccccccc}
	A; &B_1, &\cdots &B_{l-1}, &B_{l+1}, &\cdots , &B_n\\
	&C_1, &\cdots, &C_{l-1}, &C_{l+1}, &\cdots, &C_n
	\end{array}\vline~x_1t_k,\cdots,x_{k-1}t_k,x_{k+1}t_k,\cdots,x_nt_k\right].
	\end{align*}}
\end{theorem}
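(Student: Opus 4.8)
The plan is to obtain Theorem \ref{thmma2.3} from Theorem \ref{thmma2.2} in exactly the way the classical identity Lemma \ref{maid}$(iii)$ is obtained from Lemma \ref{maid}$(ii)$: by performing the substitution $x_j\mapsto x_jt$ for every $j\neq k$, together with $x_k\mapsto 1-\frac{t}{x_k}$, and then the change of summation variable $t_k\mapsto \frac{t_k}{t}$. Under the substitution in the $x$-variables the left-hand side of Theorem \ref{thmma2.2} turns into $F_A^{(n)}$ evaluated at $\left(x_1t,\ldots,x_{k-1}t,1-\frac{t}{x_k},x_{k+1}t,\ldots,x_nt\right)$, which is precisely the left-hand side of the statement to be proved; so it remains to push both substitutions through the right-hand side and simplify the resulting products of multiplicative characters. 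Throughout I would use only multiplicativity of characters together with the elementary rewritings $\chi\!\left(\frac{x_k-t}{x_k}\right)=\chi(x_k-t)\overline{\chi}(x_k)$ and $\chi(x_k^{-1})=\overline{\chi}(x_k)$.

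For the second (summation) term I would first apply the $x$-substitution, which replaces the internal factor $\overline{B}_kC_k(1-t_k+x_kt_k)$ by $\overline{B}_kC_k\!\left(1-\frac{tt_k}{x_k}\right)$, the arguments $x_jt_k$ of $F_A^{(n-1)}$ by $x_jtt_k$, and the external $\overline{C}_k(x_k)$ by $\overline{C}_k(x_k-t)C_k(x_k)$. Then the change of variable $t_k\mapsto \frac{t_k}{t}$ splits the three $t_k$-dependent characters as $A\overline{C}_k(t_k)\cdot\overline{A}C_k(t)$, as $B_k(t_k-t)\cdot\overline{B}_k(t)$, and as $\overline{B}_kC_k(x_k-t_k)\cdot B_k\overline{C}_k(x_k)$, while the arguments $x_jtt_k$ collapse back to $x_jt_k$. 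Collecting the pure $t$-factors into $\overline{A}\,\overline{B}_kC_k(t)=\overline{AB}_kC_k(t)$ and the $x_k$-factors into $C_k(x_k)\cdot B_k\overline{C}_k(x_k)=B_k(x_k)$ reproduces the prefactor $\frac{B_kC_k(-1)}{q}\overline{C}_k(x_k-t)B_k(x_k)\overline{AB}_kC_k(t)$ and the inner sum $\sum_{t_k}A\overline{C}_k(t_k)B_k(t_k-t)\overline{B}_kC_k(x_k-t_k)$ multiplying $F_A^{(n-1)}[\cdots\mid x_1t_k,\ldots,x_nt_k]$, exactly as stated.

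The first term carries no $t_k$-summation---it is the degenerate contribution from the locus where the reduction variable $t_k$ is pinned to $\left(1-\frac{t}{x_k}\right)^{-1}$---so only the $x$-substitution acts on it. Here I would substitute $x_k\mapsto 1-\frac{t}{x_k}$ and $x_j\mapsto x_jt$ directly into the first term of Theorem \ref{thmma2.2}, using $x_k-1\mapsto -\frac{t}{x_k}$, then $\overline{C}_k(x_k)\mapsto \overline{C}_k(x_k-t)C_k(x_k)$ and $\overline{A}(-x_l)\mapsto \overline{A}(-x_lt)$, and finally reassemble the resulting $(-1)$-, $x_k$-, $(x_k-t)$- and $t$-valued characters into the prefactor of the reduced series. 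I expect this term to be the genuine obstacle: unlike the summation term it has no classical counterpart in Lemma \ref{maid}, so its character prefactor and the precise powers of $t$ carried into the arguments of $F_A^{(n-1)}$ must be tracked entirely by hand, and reconciling all the $-1$, $x_k$ and $x_k-t$ evaluations is the only error-prone step. Once that prefactor is settled, the remaining $F_A^{(n-1)}$-structure---the replacement of $B_l$ by $\overline{B}_lC_l$ and the appearance of $C_l(-1)$---is identical to what has already been verified for the corresponding term of Theorem \ref{thmma2.1}, and the proof concludes by adding the two pieces.
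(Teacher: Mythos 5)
Your route coincides with the paper's own proof, which consists solely of the remark that Theorem \ref{thmma2.3} follows by applying the change of variables of Lemma \ref{maid}$(iii)$ --- $x_j\mapsto x_jt$ for $j\neq k$, $x_k\mapsto 1-\frac{t}{x_k}$, then $t_k\mapsto\frac{t_k}{t}$ --- to Theorem \ref{thmma2.2}; and your treatment of the summation term is correct and complete (it reproduces the second term of the statement exactly). The genuine gap is the first term, which you explicitly defer: you list the substitutions and call the reconciliation of characters ``the only error-prone step'', but you never perform it, and performing it shows that it does \emph{not} close. Under the substitution one has $\overline{B}_kC_k(x_k-1)\mapsto\overline{B}_kC_k\bigl(-\tfrac{t}{x_k}\bigr)=\overline{B}_kC_k(-1)\,\overline{B}_kC_k(t)\,B_k\overline{C}_k(x_k)$, $\overline{C}_k(x_k)\mapsto\overline{C}_k(x_k-t)\,C_k(x_k)$, $\overline{A}(-x_l)\mapsto\overline{A}(-x_lt)$, and --- the point glossed over --- each argument $-\frac{x_j}{x_l}$ of the reduced series is a ratio of \emph{two} substituted variables (both $j\neq k$ and $l\neq k$), so the $t$'s cancel and the arguments remain $-\frac{x_j}{x_l}$. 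Using $B_kC_k(-1)\overline{B}_kC_k(-1)=1$ and $B_k\overline{C}_k(x_k)C_k(x_k)=B_k(x_k)$, the first term of Theorem \ref{thmma2.2} therefore transforms into
\begin{align*}
\frac{\overline{A}C_l(-1)}{q}\,\overline{A}(x_lt)\,B_k(x_k)\,\overline{C}_k(x_k-t)\,\overline{B}_kC_k(t)\;
F_A^{(n-1)}\left[\begin{array}{cccccc}
A; &B_1, &\ldots, &\overline{B}_{l}C_l, &\ldots, &B_n\\
&C_1, &\ldots, &C_{l}, &\ldots, &C_n
\end{array}\vline~-\frac{x_1}{x_l},\ldots,1,\ldots,-\frac{x_n}{x_l}\right],
\end{align*}
whereas the statement asserts the prefactor $\frac{\overline{A}B_kC_kC_l(-1)}{q}\overline{A}(x_lt)B_k(x_k)\overline{B}_k(x_k-t)$ together with arguments $-\frac{x_jt}{x_l}$.

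These two expressions are genuinely different: the character acting on $x_k-t$ is $\overline{C}_k$ rather than $\overline{B}_k$, the factors $\overline{B}_kC_k(t)$ and $B_kC_k(-1)$ do not match, and the $t$'s in the arguments ought to cancel. No character identity reconciles them; for instance, at $t=1$ the substitution literally reduces to Theorem \ref{thmma2.2} evaluated at $x_k\mapsto 1-\frac{1}{x_k}$, whose first term carries $\overline{C}_k(x_k-1)$, not $\overline{B}_k(x_k-1)$. So the step you postponed is not merely error-prone: carried out faithfully, it shows the theorem as printed cannot be reached by this substitution. (The printed first term is what one obtains by keeping $B_k(t_k)$ at the pinned value $t_k=\frac{x_k}{x_k-t}$ but dropping the accompanying factor $\overline{B}_kC_k(1-t_k)$, and by multiplying only the numerators of the arguments by $t$.) A complete proof must either push the computation through and correct the first term accordingly, or supply an additional identity bridging the two forms; as written, your proposal establishes only the second term of the claimed formula.
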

\begin{lemma}\label{ext}
	For $x_k\neq1$, we have
	{\tiny
	\begin{align*}
	&(i)~F_A^{(n)}\left[\begin{array}{cccccccc}
	a; &b_1, &\ldots, &b_{k-1}, &0, &b_{k+1} &\ldots &b_n\\
	&c_1, &\ldots, &c_{k-1}, &c_k, &c_{k+1} &\ldots, &c_n
	\end{array}\vline~x_1,\ldots,x_n\right]\\
	&\hspace{2cm}=F_A^{(n-1)}\left[\begin{array}{ccccccc}
	a; &b_1, &\ldots, &b_{k-1}, &b_{k+1} &\cdots &b_n\\
	&c_1, &\ldots, &c_{k-1}, &c_{k+1} &\cdots, &c_n
	\end{array}\vline~x_1,\ldots,x_{k-1},x_{k+1},\ldots,x_n\right].\\
	&(ii)~F_A^{(n)}\left[\begin{array}{cccc}
	a; &b_1, &\ldots, &b_n\\
	&c_1, &\ldots, &c_n
	\end{array}\vline~x_1,\cdots,x_n\right]=(1-x_k)^{-a}\\
	&\hspace{.5cm} F_A^{(n)}\left[\begin{array}{cccccccc}
	a; &b_1, &\ldots, &b_{k-1}, &c_k-b_k, &b_{k+1}, &\ldots, &b_n\\
	&c_1, &\ldots, &c_{k-1}, &c_k, &c_{k+1}, &\ldots, &c_n
	\end{array}\vline~\frac{x_1}{1-x_k},\ldots,\frac{x_{k-1}}{1-x_k},-\frac{x_k}{1-x_k},\frac{x_{k+1}}{1-x_k}\ldots,\frac{x_n}{1-x_k}\right].\\
	&(iii)~F_A^{(n)}\left[\begin{array}{cccccccc}
	a; &b_1, &\ldots, &b_{k-1}, &b_k, &b_{k+1}, &\ldots, &b_n\\
	&c_1, &\ldots, &c_{k-1}, &b_k, &c_{k+1}, &\ldots, &c_n
	\end{array}\vline~x_1,\ldots,x_n\right]\\
	&\hspace{1.5cm} =(1-x_k)^{-a}F_A^{(n-1)}\left[\begin{array}{ccccccc}
	a; &b_1, &\ldots, &b_{k-1}, &b_{k+1}, &\ldots, &b_n\\
	&c_1, &\ldots, &c_{k-1}, &c_{k+1}, &\ldots, &c_n
	\end{array}\vline~\frac{x_1}{1-x_k},\ldots,\frac{x_{k-1}}{1-x_k},\frac{x_{k+1}}{1-x_k},\ldots,\frac{x_n}{1-x_k}\right].
	\end{align*}}
\end{lemma}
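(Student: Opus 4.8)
The plan is to treat the three identities separately. I would prove $(i)$ and $(iii)$ directly from the defining multiple series \eqref{f2b} and prove $(ii)$ from the integral representation \eqref{f2}; in fact $(iii)$ will fall out as a consequence of $(i)$ and $(ii)$, so the only genuine computation is in $(ii)$.

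For $(i)$ I would work entirely within the $n$-fold series \eqref{f2b} and exploit that $(b_k)_{m_k}=(0)_{m_k}=0\cdot 1\cdots(m_k-1)=0$ as soon as $m_k\geq 1$, while $(0)_0=1$. Thus every term with $m_k\geq 1$ vanishes and only the slice $m_k=0$ survives. On that slice $(a)_{m_1+\cdots+m_n}$ reduces to $(a)_{m_1+\cdots+m_{k-1}+m_{k+1}+\cdots+m_n}$, the factors $(c_k)_0$, $m_k!$ and $x_k^{m_k}$ are all $1$, and the remaining sum is exactly the $(n-1)$-variable series on the right. This is purely formal and needs no restriction on $x_k$.

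For $(ii)$ I would isolate the $t_k$-integration in \eqref{f2}, writing the linear form as $1-x_kt_k-R$ with $R=\sum_{j\neq k}x_jt_j$, and apply the substitution $t_k\mapsto 1-t_k$. This interchanges the Beta-weight $t_k^{b_k-1}(1-t_k)^{c_k-b_k-1}$ with $t_k^{c_k-b_k-1}(1-t_k)^{b_k-1}$ and turns the linear form into $(1-x_k)+x_kt_k-R$. Factoring out $(1-x_k)$ produces the global prefactor $(1-x_k)^{-a}$ and leaves the bracket $1-\sum_{j\neq k}\frac{x_j}{1-x_k}t_j+\frac{x_k}{1-x_k}t_k$; comparing this with the normalized form $1-\sum_j x_j' t_j$ identifies the transformed arguments as $\frac{x_j}{1-x_k}$ for $j\neq k$ and $-\frac{x_k}{1-x_k}$ for the $k$-th slot, exactly as claimed. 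Reading the new integrand against \eqref{f2}, the $k$-th upper parameter has become $c_k-b_k$ while $c_k$ is unchanged. I would then observe that the Gamma-prefactor in \eqref{f2} is invariant under $b_k\leftrightarrow c_k-b_k$, so it is carried along untouched and the transformed integral is literally the integral representation of the right-hand side; the hypothesis $x_k\neq 1$ is precisely what makes $(1-x_k)^{-a}$ and the rescaling meaningful.

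For $(iii)$ the cleanest route is to specialize $(ii)$ to $c_k=b_k$: then the transformed $k$-th upper parameter is $c_k-b_k=0$, so $(ii)$ rewrites the left-hand side as $(1-x_k)^{-a}$ times an $F_A^{(n)}$ carrying a $0$ in its $k$-th upper slot, whence $(i)$ collapses it to the asserted $F_A^{(n-1)}$ with variables $\frac{x_j}{1-x_k}$. Alternatively I could argue directly from \eqref{f2b}: when $c_k=b_k$ the ratio $(b_k)_{m_k}/(c_k)_{m_k}$ is $1$, and summing over $m_k$ first with $(a)_{M+m_k}=(a)_M(a+M)_{m_k}$ (where $M=\sum_{j\neq k}m_j$) together with the binomial series $\sum_{m\geq 0}\frac{(\alpha)_m}{m!}x^m=(1-x)^{-\alpha}$ yields the factor $(1-x_k)^{-(a+M)}=(1-x_k)^{-a}\prod_{j\neq k}(1-x_k)^{-m_j}$, which recombines the surviving sum into the stated series. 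I do not expect a real obstacle anywhere; the only care needed will be the bookkeeping in $(ii)$—simultaneously tracking the sign and the $(1-x_k)$-scaling of every variable and confirming the symmetry of the prefactor—and reading these as formal identities valid by analytic continuation whenever the transformed arguments leave the region of absolute convergence.
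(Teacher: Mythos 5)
Your proposal is correct and follows essentially the same route as the paper: $(i)$ directly from the series \eqref{f2b} via the vanishing of $(0)_{m_k}$ for $m_k\geq 1$, $(ii)$ from the integral representation \eqref{f2} via the substitution $t_k\mapsto 1-t_k$ (noting the invariance of the Gamma-prefactor under $b_k\leftrightarrow c_k-b_k$), and $(iii)$ by setting $c_k=b_k$ in $(ii)$ and then invoking $(i)$. Your write-up simply supplies the bookkeeping the paper leaves implicit, plus an optional alternative derivation of $(iii)$ by summing the binomial series, which the paper does not use.
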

\begin{proof} The proof of $(i)$ easily follows from \eqref{f2b}. Using the substitution $t_k=1-t_k'$ for any $k$ with $1\leq k\leq n$, and $t_j=t_j'$ for all $j$ with $1\leq j\leq n,j\neq k$, the reduction formula $(ii)$ can be easily obtained from \eqref{f2}. Considering $c_k=b_k$ in $(ii)$, and then using $(i)$, we complete the proof of $(iii)$.
\end{proof}
Motivated by the work  Li {\it et. al.} \cite{li}, we now establish the finite field analogue of the identities of Lemma \ref{ext}. The proof of the results relies on Definition \ref{def1} of finite field Lauricella series $F^{(n)}_A$.
\begin{theorem}\label{eps}
For $n\geq2$, let $A,B_1,\ldots,B_n,C_1,\ldots,C_n\in\widehat{\mathbb{F}_q}$ and $x_1,\ldots,x_n\in\mathbb F_q$. If $x_k\neq1$, then
{\tiny
\begin{align*}
&F_A^{(n)}\left[\begin{array}{cccccccc}
A; &B_1, &\ldots,&B_{k-1},&\epsilon, &B_{k+1},&\ldots, &B_n\\
&C_1, &\ldots,&C_{k-1},&C_k, &C_{k+1},&\ldots, &C_n
\end{array}\vline~x_1,\ldots,x_n\right]\\
&\hspace{1cm}=\frac{\epsilon(x_k)}{q}C_k(-1) F_A^{(n-1)}\left[\begin{array}{ccccccc}
A\overline{C_k}; &B_1,  &\ldots, &B_{k-1}, &B_{k+1}, &\ldots, &B_n\\
&C_1, &\ldots, &C_{k-1}, &C_{k+1}, &\ldots, &C_n
\end{array}\vline~\frac{x_1}{1-x_k},\ldots,\frac{x_{k-1}}{1-x_k},\frac{x_{k+1}}{1-x_k},\ldots\frac{x_n}{1-x_k}\right]\\
&\hspace{1.5cm}-\frac{\epsilon(x_k)}{q}C_k(-1)F_A^{(n-1)}
\left[\begin{array}{ccccccc}
A; &B_1,  &\ldots, &B_{k-1}, &B_{k+1}, &\ldots, &B_n\\
&C_1, &\ldots, &C_{k-1}, &C_{k+1}, &\ldots, &C_n
\end{array}\vline~x_1,\ldots,x_{k-1},x_{k+1},\ldots, x_n\right].
\end{align*}}
\end{theorem}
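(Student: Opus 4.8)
The plan is to work directly from Definition \ref{def1}, exactly in the spirit of the proof of Theorem \ref{thmma2.1}, rather than through the binomial expansion of Theorem \ref{exbinom}. Setting $B_k=\epsilon$ in Definition \ref{def1}, the $k$-th factor of the product collapses: since $\overline{\epsilon}=\epsilon$ and $\epsilon(-1)=1$, it becomes $\epsilon(x_k)\frac{C_k(-1)}{q}\,\epsilon(t_k)\,C_k(1-t_k)$. First I would pull the constant $\frac{\epsilon(x_k)C_k(-1)}{q}$ out front and isolate the inner sum over $t_k$, writing $\overline{A}(1-x_1t_1-\cdots-x_nt_n)=\overline{A}(u-x_kt_k)$ with $u:=1-\sum_{i\neq k}x_it_i$, so that the variables $t_i$ with $i\neq k$ enter the $t_k$-sum only through $u$.

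The key structural observation is that the factor $\epsilon(t_k)$ annihilates the term $t_k=0$. Thus I would split $\sum_{t_k\in\mathbb F_q}\epsilon(t_k)(\cdots)=\sum_{t_k\in\mathbb F_q}(\cdots)-(\cdots)\big|_{t_k=0}$, which is what should produce the two terms of the statement. The $t_k=0$ contribution is $C_k(1)\overline{A}(u)=\overline{A}(u)$; reinstating the remaining factors and summing over the $t_i$ ($i\neq k$) reassembles, via Definition \ref{def1} in $n-1$ variables, exactly $-\frac{\epsilon(x_k)C_k(-1)}{q}F_A^{(n-1)}$ with parameter $A$ and arguments $x_1,\ldots,x_{k-1},x_{k+1},\ldots,x_n$. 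This is the second term, and it should fall out essentially for free.

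The substantive part is the full inner sum $T:=\sum_{t_k\in\mathbb F_q}C_k(1-t_k)\overline{A}(u-x_kt_k)$, which must reproduce the first term. Here I would imitate the change of variable used in Lemma \ref{maid}, substituting $t_k\mapsto 1-t_k$ to turn the argument into $(u-x_k)+x_kt_k$ and then rescaling by $(1-x_k)^{-1}$ (legitimate since $x_k\neq1$), so that the argument of $\overline{A\overline{C_k}}$ becomes $\frac{u-x_k}{1-x_k}=1-\sum_{i\neq k}\frac{x_i}{1-x_k}t_i$, precisely the shape needed to recognise $F_A^{(n-1)}$ with top parameter $A\overline{C_k}$ and arguments $\frac{x_i}{1-x_k}$. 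The main obstacle, and the step demanding the most care, is the evaluation of this one-dimensional character sum together with verifying that its normalising constant collapses \emph{exactly} to the factor $\frac{\epsilon(x_k)}{q}C_k(-1)$ claimed in the statement. The substitution naturally produces a one-variable Gauss/Jacobi-type sum together with powers of $C_k$ and $A$ evaluated at $x_k$ and $1-x_k$, and one must check, using \eqref{binomgreene} and \eqref{bionomialproperty1}, that these combine to leave only the stated coefficient before the remaining $t_i$-sums close up into $F_A^{(n-1)}$; I would also isolate the degenerate ranges $x_k=0$ and $u-x_k=0$, where the $\epsilon$-factor and the character evaluations behave differently.
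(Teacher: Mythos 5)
Your reduction is set up correctly, and its first two steps are sound (they in fact coincide with the paper's own argument): with $B_k=\epsilon$ the $k$-th factor collapses to $\epsilon(x_k)\frac{C_k(-1)}{q}\epsilon(t_k)C_k(1-t_k)$, and splitting off the $t_k=0$ term produces exactly the second term $-\frac{\epsilon(x_k)C_k(-1)}{q}F_A^{(n-1)}[A;\ldots]$. The genuine gap is the step you defer to the end, namely that the full inner sum $T=\sum_{t_k\in\mathbb F_q}C_k(1-t_k)\overline{A}(u-x_kt_k)$, with $u=1-\sum_{i\neq k}x_it_i$, "collapses exactly" to yield the coefficient $\frac{\epsilon(x_k)}{q}C_k(-1)$. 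It does not, and no bookkeeping with \eqref{binomgreene} and \eqref{bionomialproperty1} can make it do so. For $x_k\neq0$, the substitutions $t_k\mapsto 1-t_k$ followed by $t_k\mapsto -\frac{(u-x_k)t_k}{x_k}$ give
\begin{align*}
T=C_k(-1)\,\overline{C}_k(x_k)\,C_k\overline{A}(u-x_k)\,J(C_k,\overline{A}),
\end{align*}
valid whenever $u\neq x_k$ (hence for all $u$ when $A\neq C_k$), so an honest Jacobi-sum factor $J(C_k,\overline{A})$ survives. Matching the first term of the statement would force $C_k(-1)\overline{C}_k(x_k)J(C_k,\overline{A})=A\overline{C}_k(1-x_k)$, which is impossible in general: the left side has absolute value $\sqrt{q}$ whenever $A$, $C_k$, $A\overline{C}_k$ are all nontrivial, while the right side is unimodular. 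Even in a degenerate case it fails: for $q=5$, $A=\epsilon$, $C_k=\chi$ with $\chi(2)=i$, $x_k=2$, the left side is $\chi(-1)\overline{\chi}(2)J(\chi,\epsilon)=(-1)(-i)(-1)=-i$, whereas the right side is $\overline{\chi}(1-2)=-1$. Since this discrepancy is a constant (independent of $t_i$, $i\neq k$) multiplying the generically nonzero series $F_A^{(n-1)}[A\overline{C}_k;\ldots]$, summing over the remaining variables cannot repair it.

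So the obstacle you flagged is fatal rather than technical: carried through honestly, your route produces the first term with coefficient $\frac{\epsilon(x_k)}{q}\overline{C}_k(x_k)\,C_k\overline{A}(1-x_k)\,J(C_k,\overline{A})=\epsilon(x_k)A(-1)\,\overline{C}_k(x_k)\,C_k\overline{A}(1-x_k)\binom{C_k}{A}$ in place of $\frac{\epsilon(x_k)}{q}C_k(-1)$ (plus a boundary contribution supported on $u=x_k$ when $A=C_k$); that is, it refutes the statement as printed rather than proving it. For comparison, the paper's own proof follows exactly your plan and breaks at exactly this point: it "replaces $t_k$ by $\frac{x_1t_1+\cdots+x_{k-1}t_{k-1}+x_{k+1}t_{k+1}+\cdots+x_nt_n}{1-x_k}$", an expression that does not involve $t_k$ at all, which is not a bijective re-indexing of the sum over $t_k$; that illegitimate substitution is precisely what makes the Jacobi-sum factor disappear from the paper's computation. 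In short, your decomposition is right and the crux you isolated is the real crux, but it cannot be closed, because the identity to be proved is false as stated.
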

\begin{proof}
We first consider
\begin{align*}
M_k(t_1,\ldots,t_n)=\frac{C_k(-1)}{q}\left(\prod_{\substack{i=1\\i\neq k}}^{n}\frac{B_iC_i(-1)}{q}B_i(t_i)\overline{B}_iC_i(1-t_i)
\right)C_k(1-t_k)\overline{A}(1-x_1t_1-x_2t_2-\cdots-x_nt_n).
\end{align*}
Replacing $t_k$ by $\frac{x_1t_1+\cdots+x_{k-1}t_{k-1}+x_{k+1}t_{k+1}+\cdots+x_{n}t_{n}}{1-x_k}$, we obtain
\begin{align*}
\sum_{t_1,\ldots, t_n\in\mathbb{F}_q} M_k(t_1,\cdots,t_n)&=\frac{C_k(-1)}{q}\sum_{t_1\cdots t_n}\left(\prod_{i=1:i\neq k}^{n}\frac{B_iC_i(-1)}{q}B_i(t_i)\overline{B}_iC_i(1-t_i)
\right)\\
& \hspace{1cm} \overline{A}C_k\left(1-\frac{x_1t_1}{1-x_k}-\cdots-\frac{x_{k-1}t_{k-1}}{1-x_k}-\frac{x_{k+1}t_{k+1}}{1-x_k}\cdots-\frac{x_nt_n}{1-x_k}\right).
\end{align*}
From Definition \ref{def1}, we note that
\begin{align*}
F_A^{(n)}&\left[\begin{array}{cccccccc}
A; &B_1, &\ldots,&B_{k-1},&\epsilon, &B_{k+1},&\ldots, &B_n\\
&C_1, &\ldots,&C_{k-1},&C_k, &C_{k+1},&\ldots, &C_n
\end{array}\vline~x_1,\ldots,x_n\right]\\
&\hspace{2cm}=\epsilon(x_1,\ldots, x_n)\sum_{t_1,\ldots, t_n\in\mathbb{F}_q} M_k(t_1,\ldots,t_n)\epsilon(t_k)\notag\\
&\hspace{2cm}=\epsilon(x_1\cdots x_n)\sum_{\substack{t_1,\ldots, t_n\in\mathbb{F}_q\\t_k\neq 0}} M_k(t_1,\ldots,t_n)\notag\\
&\hspace{2cm}=\epsilon(x_1\cdots x_n)\sum_{t_1,\ldots, t_n\in\mathbb{F}_q} M_k(t_1,\ldots,t_n)-\epsilon(x_1\cdots x_n)\sum_{\substack{t_1,\ldots, t_n\in\mathbb{F}_q\\t_k=0}} M_k(t_1,\ldots,t_n).\notag
\end{align*}
Hence the result follows.
\end{proof}
\begin{theorem}\label{equal}
For $n\geq2$, let $A,B_1,\ldots,B_n,C_1,\ldots,C_n\in\widehat{\mathbb{F}_q}$ and $x_1,\ldots,x_n\in\mathbb F_q$. If $x_k\neq1$, then
{\tiny
\begin{align*}
F_A^{(n)}&\left[\begin{array}{cccccccc}
A; &B_1, &\ldots,&B_{k-1},&B_k, &B_{k+1},&\ldots, &B_n\\
&C_1, &\ldots,&C_{k-1},&B_k, &C_{k+1},&\ldots, &C_n
\end{array}\vline~x_1,\ldots,x_n\right]\\
&=\frac{\epsilon(x_k)}{q}\overline{A}(1-x_k)F_A^{(n-1)}\left[\begin{array}{ccccccc}
\overline{B_k}; &B_1, &\ldots,&B_{k-1}, &B_{k+1},  &\ldots, &B_n\\
&C_1, &\ldots, &C_{k-1}, &C_{k+1} &\ldots, &C_n
\end{array}\vline~\frac{x_1}{x_k},\ldots,\frac{x_{k-1}}{x_k},\frac{x_{k+1}}{x_k}\ldots,\frac{x_n}{x_k}\right]\\
&\hspace{.2cm}-\frac{\epsilon(x_k)}{q}\overline{A}(1-x_k)F_A^{(n-1)}\left[\begin{array}{ccccccc}
A; &B_1, &\ldots, &B_{k-1}, &B_{k+1},  &\ldots, &B_n\\
&C_1, &\ldots, &C_{k-1}, &C_{k+1}, &\ldots, &C_n
\end{array}\vline~\frac{x_1}{1-x_k},\ldots,\frac{x_{k-1}}{1-x_k},\frac{x_{k+1}}{1-x_k}\ldots,\frac{x_n}{1-x_k}\right].
\end{align*}}
\end{theorem}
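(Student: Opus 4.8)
The plan is to treat Theorem~\ref{equal} as the specialization $C_k=B_k$ of the situation handled in Theorem~\ref{eps}, mirroring the way the classical reduction Lemma~\ref{ext}$(iii)$ is deduced from Lemma~\ref{ext}$(ii)$ followed by Lemma~\ref{ext}$(i)$. First I would substitute $C_k=B_k$ into Definition~\ref{def1}. Because then $\overline{B}_kC_k=\epsilon$ and $B_kC_k(-1)=B_k(-1)^2=1$, the $k$-th factor of the defining sum simplifies to $\frac{\epsilon(x_k)}{q}B_k(t_k)\epsilon(1-t_k)$; thus the only effect of the coincidence $C_k=B_k$ is to insert the indicator $\epsilon(1-t_k)$, which restricts the inner summation to $t_k\neq1$.

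Next I would carry out the change of variable $t_k\mapsto 1-t_k$, the finite-field shadow of the substitution used for Lemma~\ref{ext}$(ii)$. It turns $B_k(t_k)\epsilon(1-t_k)$ into $B_k(1-t_k)\epsilon(t_k)$ and sends the argument of $\overline{A}$ to $1-x_k+x_kt_k-\sum_{i\neq k}x_it_i$, which (since $x_k\neq1$) factors as $(1-x_k)\bigl(1+\frac{x_k}{1-x_k}t_k-\sum_{i\neq k}\frac{x_i}{1-x_k}t_i\bigr)$. Extracting $\overline{A}(1-x_k)$ and comparing the surviving $k$-th factor with Definition~\ref{def1}, the whole sum equals $B_k(-1)\,\overline{A}(1-x_k)$ times a finite-field series $F_A^{(n)}$ whose $k$-th numerator character is $\epsilon$, whose $k$-th denominator character is $B_k$, whose $k$-th variable is $-x_k/(1-x_k)$, and whose remaining variables are $x_i/(1-x_k)$.

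Since the $k$-th numerator character of this intermediate series is the trivial character, I can now apply Theorem~\ref{eps} to collapse it into a difference of two $(n-1)$-variable Lauricella series. Here $1-(-x_k/(1-x_k))=1/(1-x_k)$, so the rescaled variables $x_i/(1-x_k)$ simplify under the reduction, the two occurrences of $B_k(-1)$ multiply to $1$, and the common prefactor $\frac{\epsilon(x_k)}{q}\overline{A}(1-x_k)$ emerges; one of the two resulting series is the analogue of the single classical term in Lemma~\ref{ext}$(iii)$, while the other is the genuinely finite-field correction.

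The step I expect to be the real obstacle is the character-and-argument bookkeeping needed to identify these two reduced series with the two summands written in the statement: one has to track the numerator character produced by Theorem~\ref{eps} (which combines $A$ with $\overline{B}_k$), the sign $B_k(-1)$, and the nested rescalings, and then confirm that the correction term indeed acquires the numerator character $\overline{B}_k$ and the variables $x_i/x_k$ claimed in the first summand. A parallel, more computational route---closer to the proof of Theorem~\ref{eps}---is to avoid the intermediate series altogether: write $\epsilon(1-t_k)=1-\delta(1-t_k)$, read off the second summand directly from the $t_k=1$ contribution via $1-x_k-\sum_{i\neq k}x_it_i=(1-x_k)(1-\sum_{i\neq k}\frac{x_it_i}{1-x_k})$, and evaluate the remaining full sum over $t_k$ by a linear substitution, the residual character sum in $t_k$ reassembling the first summand through \eqref{binomgreene}. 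On that route the delicate point is the boundary contribution where $1-\sum_{i\neq k}x_it_i$ vanishes, which is precisely what the formalism behind Theorem~\ref{eps} absorbs automatically.
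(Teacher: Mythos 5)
Your first two steps are sound: with $C_k=B_k$ the $k$-th factor of Definition \ref{def1} is indeed $\frac{\epsilon(x_k)}{q}B_k(t_k)\epsilon(1-t_k)$, and the substitution $t_k\mapsto 1-t_k$ gives an exact Pfaff-type identity expressing the left side as $B_k(-1)\overline{A}(1-x_k)$ times the series with $k$-th numerator $\epsilon$, $k$-th denominator $B_k$, and arguments $y_k=-x_k/(1-x_k)$, $y_i=x_i/(1-x_k)$ for $i\neq k$. But the ``obstacle'' you flag at the end is fatal, not a matter of bookkeeping. Since $1-y_k=1/(1-x_k)$ and $y_i/(1-y_k)=x_i$, Theorem \ref{eps} applied to that intermediate series produces as its first term
\begin{equation*}
\frac{\epsilon(x_k)}{q}\overline{A}(1-x_k)\,F_A^{(n-1)}\bigl[A\overline{B_k};B_1,\ldots;C_1,\ldots\bigr]\bigl(x_1,\ldots,x_{k-1},x_{k+1},\ldots,x_n\bigr),
\end{equation*}
i.e.\ numerator character $A\overline{B_k}$ at the \emph{unrescaled} arguments $x_i$, whereas Theorem \ref{equal} asserts numerator $\overline{B_k}$ at the arguments $x_i/x_k$. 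These do not agree in general, so even granting Theorem \ref{eps} your route proves a different identity from the printed one, and no tracking of signs and rescalings can close that gap. (Note also that the paper's own proof never uses Theorem \ref{eps}; it is a direct computation whose skeleton is your ``alternative route''.)

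The gap cannot be closed because the printed statement is false, and the paper's proof breaks at exactly the point your second route probes. The paper splits off the $t_k=1$ term (that part is fine and yields the second summand, as you predict), and then evaluates the remaining full sum over $t_k$ by ``replacing $t_k$ by $1-\frac{1}{x_k}\sum_{i\neq k}x_it_i$''. That is not a change of variables: it replaces the character sum $\sum_{t_k}B_k(t_k)\overline{A}(1-S-x_kt_k)$, where $S=\sum_{i\neq k}x_it_i$, by its single summand at the point where the argument of $\overline{A}$ equals $1-x_k$. Doing your linear substitution honestly and invoking \eqref{binomgreene} gives, for $S\neq1$ and $x_k\neq0$,
\begin{equation*}
\sum_{t_k\in\mathbb{F}_q}B_k(t_k)\overline{A}(1-S-x_kt_k)=q\,A(-1)\binom{B_k}{A}\overline{B}_k(x_k)\,B_k\overline{A}(1-S),
\end{equation*}
plus a contribution $(q-1)\overline{A}(-x_k)$ from $S=1$ when $B_k=A$. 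So the correct first summand is $\epsilon(x_k)A(-1)\binom{B_k}{A}\overline{B}_k(x_k)$ times the $F_A^{(n-1)}$ with numerator $A\overline{B_k}$ at arguments $x_i$, plus a boundary term supported on $B_k=A$; it carries a Jacobi-sum factor and cannot have the printed form $\frac{\epsilon(x_k)}{q}\overline{A}(1-x_k)F_A^{(n-1)}[\overline{B_k};\ldots](x_i/x_k)$. Concretely, take $q=3$, $n=2$, $k=1$, $A=C_2=\epsilon$, $B_1=C_1=B_2=\phi$ the quadratic character, and $(x_1,x_2)=(2,1)$: Definition \ref{def1} gives $1/9$ for the left side (the only surviving term is $t_1=t_2=2$), while both series on the right of Theorem \ref{equal} vanish, since in each the only admissible $t$ is $t=2$, where the last factor is a character evaluated at $1-2\cdot2\equiv0\pmod 3$; hence the right side is $0$. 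Finally, Theorem \ref{eps} itself is proved in the paper by the same invalid ``replacement'' and fails analogous numerical tests, so your main route also rests on a statement that is false in general. Both of your routes are methodologically the right way to attack this reduction, but they can only terminate in a corrected version of the formula, not in Theorem \ref{equal} as printed.
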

\begin{proof}
We consider
\begin{align*}
N_k(t_1,\ldots,t_n)
&=\frac{1}{q}\left(\prod_{\substack{i=1\\i\neq k}}^{n}\frac{B_iC_i(-1)}{q}B_i(t_i)\overline{B}_iC_i(1-t_i)
\right)B_k(t_k)\overline{A}(1-x_1t_1-x_2t_2-\cdots-x_nt_n).
\end{align*}
Replacing $t_k$ by $1-\frac{x_1t_1+\cdots+x_{k-1}t_{k-1}+x_{k+1}t_{k+1}+\cdots+x_{n}t_{n}}{x_k}$, we have
\begin{align*}
\sum_{t_1,\ldots, t_n\in\mathbb{F}_q}N_k(t_1,\ldots,t_n)&=\frac{1}{q}\sum_{t_1\cdots t_n}\left(\prod_{\substack{i=1\\i\neq k}}^{n}\frac{B_iC_i(-1)}{q}B_i(t_i)\overline{B}_iC_i(1-t_i)
\right)\\
&\hspace{.5cm}\overline{A}(1-x_k)B_k\left(1-\frac{x_1t_1}{x_k}-\cdots-\frac{x_{k-1}t_{k-1}}{x_k}-\frac{x_{k+1}t_{k+1}}{x_k}\cdots-\frac{x_nt_n}{x_k}\right).
\end{align*}
Again,
\begin{align*}
\sum_{\substack{t_1,\ldots, t_n\in\mathbb{F}_q\\t_k=1}} N_k(t_1,\ldots,t_n)&=\frac{1}{q}\sum_{t_1,\ldots, t_n\in\mathbb{F}_q}\left(\prod_{\substack{i=1\\i\neq k}}^{n}\frac{B_iC_i(-1)}{q}B_i(t_i)\overline{B}_iC_i(1-t_i)
\right)B_k(1)\\
&\hspace{1cm}\overline{A}(1-x_1t_1-\cdots-x_{k-1}t_{k-1}-x_k-x_{k+1}t_{k+1}-\cdots-x_nt_n)\\
&=\sum_{t_1,\ldots, t_n\in\mathbb{F}_q}\left(\prod_{\substack{i=1\\i\neq k}}^{n}\frac{B_iC_i(-1)}{q}B_i(t_i)\overline{B}_iC_i(1-t_i)
\right)\\
&\hspace{.5cm}\overline{A}(1-x_k)\overline{A}\left(1-\frac{x_1t_1}{1-x_k}-\cdots-\frac{x_{k-1}t_{k-1}}{1-x_k}-\frac{x_{k+1}t_{k+1}}{1-x_k}-\cdots-\frac{x_nt_n}{1-x_k}\right).
\end{align*}
By Definition \ref{def1}, we have
\begin{align*}
&F_A^{(n)}\left[\begin{array}{cccccccc}
A; &B_1, &\ldots,&B_{k-1},&B_k, &B_{k+1},&\ldots, &B_n\\
&C_1, &\ldots,&C_{k-1},&B_k, &C_{k+1},&\ldots, &C_n
\end{array}\vline~x_1,\ldots,x_n\right]\\
&\hspace{2cm}=\epsilon(x_1\cdots x_n)\sum_{t_1,\ldots, t_n\in\mathbb{F}_q} N_k(t_1,\ldots,t_n)\epsilon(1-t_k)\notag\\
&\hspace{2cm}=\epsilon(x_1\cdots x_n)\sum_{\substack{t_1\cdots t_n\\t_k\neq1}} N_k(t_1,\cdots,t_n)\notag\\
&\hspace{2cm}=\epsilon(x_1\cdots x_n)\sum_{t_1,\ldots, t_n\in\mathbb{F}_q} N_k(t_1,\ldots,t_n)-\epsilon(x_1\cdots x_n)\sum_{\substack{t_1,\ldots, t_n\in\mathbb{F}_q\\t_k=1}} N_k(t_1,\ldots,t_n).\notag
\end{align*}
Thus we complete the proof of the theorem.
\end{proof}
\section{Generating Functions}
In this section, we deduce finite field analogue of certain generating functions for the Lauricella series $F_A^{(n)}$. 
\begin{theorem}\label{gen}
For any $a\in\mathbb C$ and $\mid t\mid <1$, we have
	\begin{align*}
	&\sum_{k=0}^{\infty}{a+k-1\choose k}F_A^{(n)}\left[\begin{array}{cccc}
	a+k; &b_1, &\ldots, &b_n\\
	&c_1, &\ldots, &c_n
	\end{array}\vline~x_1,\ldots,x_n\right]t^k\\
	&\hspace{4cm}=(1-t)^{-a}F_A^{(n)}\left[\begin{array}{cccc}
	a; &b_1, &\ldots, &b_n\\
	&c_1, &\ldots, &c_n
	\end{array}\vline~\frac{x_1}{1-t},\ldots,\frac{x_n}{1-t}\right].
	\end{align*}
\end{theorem}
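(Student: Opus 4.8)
The identity is a statement about classical complex Lauricella series, so the plan is a direct power-series computation starting from the defining expansion \eqref{f2b}. Writing $|m| = m_1 + \cdots + m_n$ and using $\binom{a+k-1}{k} = \frac{(a)_k}{k!}$, I would first expand the left-hand side as the double sum
\[
\sum_{k\geq 0}\sum_{m_1,\ldots,m_n\geq 0}\frac{(a)_k}{k!}\cdot\frac{(a+k)_{|m|}\,(b_1)_{m_1}\cdots(b_n)_{m_n}}{m_1!\cdots m_n!\,(c_1)_{m_1}\cdots(c_n)_{m_n}}\,x_1^{m_1}\cdots x_n^{m_n}\,t^k,
\]
and, for $|t|<1$ with the $x_i$ in the polydisc of convergence of $F_A^{(n)}$, interchange the summations to sum over $k$ first with the multi-index $(m_1,\ldots,m_n)$ held fixed.

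The key step is a Pochhammer collapse. Since $(a+k)_{|m|} = (a)_{k+|m|}/(a)_k$, the factor $(a)_k$ cancels and $\binom{a+k-1}{k}(a+k)_{|m|} = (a)_{k+|m|}/k!$. Splitting $(a)_{k+|m|} = (a)_{|m|}(a+|m|)_k$ and invoking the binomial series $\sum_{k\geq 0}\frac{(c)_k}{k!}t^k = (1-t)^{-c}$ with $c = a+|m|$, I obtain for each fixed $(m_1,\ldots,m_n)$
\[
\sum_{k\geq 0}\binom{a+k-1}{k}(a+k)_{|m|}\,t^k = (a)_{|m|}\,(1-t)^{-a-|m|}.
\]

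Substituting this back and pulling $(1-t)^{-a}$ out of the surviving multiple sum, the leftover factor $(1-t)^{-|m|}x_1^{m_1}\cdots x_n^{m_n}$ regroups as $\prod_{i=1}^{n}(x_i/(1-t))^{m_i}$, and what remains is precisely the defining series of $F_A^{(n)}$ with each $x_i$ replaced by $x_i/(1-t)$. This reproduces the right-hand side. The one point demanding care is the legitimacy of the term-by-term interchange of the two summations; this is the main (and essentially the only non-formal) obstacle, and it follows from absolute convergence throughout the stated region $|t|<1$, after which all remaining steps are the rising-factorial bookkeeping indicated above.
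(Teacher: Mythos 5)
Your proposal is correct and is essentially the same proof as the paper's: expand $F_A^{(n)}$ by its defining series, interchange the order of summation, collapse $\binom{a+k-1}{k}(a+k)_{|m|}$ into $(a)_{|m|}\,(a+|m|)_k/k!$, apply the binomial series $\sum_{k\geq 0}\binom{a+|m|+k-1}{k}t^k=(1-t)^{-(a+|m|)}$, and reassemble the result as $F_A^{(n)}$ at the arguments $x_i/(1-t)$. The only cosmetic difference is that you carry out the bookkeeping with Pochhammer symbols where the paper uses Gamma-function ratios (and you are more explicit about justifying the interchange of summations, which the paper performs silently).
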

\begin{proof}
Using
$${a\choose n}=\frac{\Gamma(a+1)}{n!~\Gamma(a-n+1)}~\hbox{and}~
(a)_{m}=\frac{\Gamma(a+m)}{\Gamma(a)},$$
we have
\begin{align*}
\sum_{k=0}^{\infty}&{a+k-1\choose k}F_A^{(n)}\left[\begin{array}{cccc}
a+k; &b_1, &\ldots, &b_n\\
&c_1, &\ldots, &c_n
\end{array}\vline~x_1,\ldots,x_n\right]t^k\\
&=\sum_{k=0}^{\infty}{a+k-1\choose k}
\left[\sum_{\substack{m_i\geq0\\1\leq i\leq n}}\frac{(a+k)_{m_1+\cdots+m_n}(b_1)_{m_1}\cdots(b_n)_{m_n}}{(c_1)_{m_1}\cdots(c_n)_{m_n}}\frac{x_1^{m_1}}{m_1!}\cdots\frac{x_n^{m_n}}{m_n!}\right]t^k\\
&=\sum_{\substack{k,m_i\geq0\\1\leq i\leq n}}\left[\frac{\Gamma(a+m_1+\cdots+m_n+k)}{k!~\Gamma(a)}\frac{(b_1)_{m_1}\cdots(b_n)_{m_n}}{(c_1)_{m_1}\cdots(c_n)_{m_n}}\frac{x_1^{m_1}}{m_1!}\cdots\frac{x_n^{m_n}}{m_n!}\right]t^k\\
&=\sum_{\substack{m_i\geq0\\1\leq i\leq n}}\left[\frac{\Gamma(a+m_1+\cdots+m_n)}{\Gamma(a)}\frac{(b_1)_{m_1}\cdots(b_n)_{m_n}}{(c_1)_{m_1}\cdots(c_n)_{m_n}}\frac{x_1^{m_1}}{m_1!}\cdots\frac{x_n^{m_n}}{m_n!}\left[\sum_{k=0}^{\infty}\frac{\Gamma(a+m_1+\cdots+m_n+k)}{k!~\Gamma(\Gamma(a+m_1+\cdots+m_n))}\right]\right]t^k\\
&=\sum_{\substack{m_i\geq0\\1\leq i\leq n}}\left[\frac{(a)_{m_1+\cdots+m_n}(b_1)_{m_1}\cdots(b_n)_{m_n}}{(c_1)_{m_1}\cdots(c_n)_{m_n}}\frac{x_1^{m_1}}{m_1!}\cdots\frac{x_n^{m_n}}{m_n!}\left[\sum_{k=0}^{\infty}{a+m_1+\cdots+m_n+k-1\choose k}t^k\right]\right].
\end{align*}
Using the fact that
$$\sum_{k=0}^{\infty}{a+m_1+\cdots+m_n+k-1\choose k}t^k=(1-t)^{-(a+m_1+\cdots+m_n)},~\mid t<1,$$
we complete the proof of the theorem.
\end{proof}
The following theorem gives a finite field analogue of Theorem \ref{gen}.
\begin{theorem}\label{gen1}
Let $A,B_1,\ldots,B_n,C_1,\ldots,C_n\in\widehat{\mathbb{F}_q}$, $x_1,\ldots,x_n\in\mathbb F_q$ and $t\in\mathbb F_q^\times\backslash\lbrace1\rbrace$. Then
\begin{align*}
\frac{q}{q-1}&\sum_{\theta\in\widehat{\mathbb F_q}}{A\theta\choose\theta}F_A^{(n)}\left[\begin{array}{cccc}
A\theta; &B_1, &\ldots, &B_n\\
&C_1, &\ldots, &C_n
\end{array}\vline~x_1,\ldots,x_n\right]\theta(t)\\
&\hspace{2cm}=\overline{A}(1-t)F_A^{(n)}\left[\begin{array}{cccc}
A; &B_1, &\ldots, &B_n\\
&C_1, &\ldots, &C_n
\end{array}\vline~\frac{x_1}{1-t},\ldots,\frac{x_n}{1-t}\right].
\end{align*}
\end{theorem}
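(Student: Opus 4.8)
The plan is to imitate the proof of the classical generating function in Theorem~\ref{gen}, with the finite-field identity \eqref{greenevar} playing the role that the binomial series $\sum_{k\ge0}\binom{a+k-1}{k}t^k=(1-t)^{-a}$ plays there. I would begin from the left-hand side, expand the inner series $F_A^{(n)}[A\theta;\ldots\mid x_1,\ldots,x_n]$ by Definition~\ref{def1}, and interchange the finite sum over $t_1,\ldots,t_n$ with the character sum over $\theta$. Writing $S:=1-x_1t_1-\cdots-x_nt_n$, the left-hand side becomes
\begin{align*}
\sum_{t_1,\ldots,t_n\in\mathbb{F}_q}\left(\prod_{i=1}^n\epsilon(x_i)\frac{B_iC_i(-1)}{q}B_i(t_i)\overline{B}_iC_i(1-t_i)\right)\overline{A}(S)\,\frac{q}{q-1}\sum_{\theta\in\widehat{\mathbb{F}_q}}\binom{A\theta}{\theta}\theta(t)\,\overline{\theta}(S),
\end{align*}
so everything reduces to evaluating the inner $\theta$-sum for each fixed tuple $(t_1,\ldots,t_n)$.

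The decisive step is to collapse this $\theta$-sum. For $S\neq0$ I would rewrite $\theta(t)\overline{\theta}(S)=\theta(tS^{-1})$ and invoke \eqref{greenevar} with $x=tS^{-1}$, which gives $\frac{q}{q-1}\sum_{\theta}\binom{A\theta}{\theta}\theta(tS^{-1})=\overline{A}(1-tS^{-1})-\delta(tS^{-1})$. Because $t\in\mathbb{F}_q^\times$, the argument $tS^{-1}$ is nonzero and the $\delta$-term drops out; multiplying by the prefactor $\overline{A}(S)$ then yields $\overline{A}(S)\,\overline{A}(1-tS^{-1})=\overline{A}(S-t)=\overline{A}\big((1-t)-\sum_i x_it_i\big)$. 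Substituting back, the left-hand side equals
\begin{align*}
\sum_{t_1,\ldots,t_n\in\mathbb{F}_q}\left(\prod_{i=1}^n\epsilon(x_i)\frac{B_iC_i(-1)}{q}B_i(t_i)\overline{B}_iC_i(1-t_i)\right)\overline{A}\Big((1-t)-\textstyle\sum_i x_it_i\Big).
\end{align*}

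It remains to recognize this sum as the right-hand side, which is purely formal. Since $t\neq1$ we have $1-t\neq0$, so $\overline{A}\big((1-t)-\sum_i x_it_i\big)=\overline{A}(1-t)\,\overline{A}\big(1-\sum_i\frac{x_i}{1-t}t_i\big)$, and because $\epsilon(x_i)=\epsilon\big(\frac{x_i}{1-t}\big)$ the remaining factors are exactly those of Definition~\ref{def1} with each argument $x_i$ replaced by $\frac{x_i}{1-t}$. Hence the sum factors as $\overline{A}(1-t)\,F_A^{(n)}[A;\ldots\mid\frac{x_1}{1-t},\ldots,\frac{x_n}{1-t}]$, the desired right-hand side.

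The step that genuinely needs care is the treatment of the degenerate locus $S=0$, i.e.\ the tuples with $x_1t_1+\cdots+x_nt_n=1$. There one cannot write $\overline{\theta}(S)\theta(t)=\theta(tS^{-1})$, and in fact $\overline{\theta}(0)=0$ annihilates the inner $\theta$-sum on the left, whereas the same tuples formally contribute $\overline{A}(-t)$ on the right. The crux is therefore to verify that the boundary contribution $\overline{A}(-t)\sum_{\sum_i x_it_i=1}\prod_{i=1}^n\epsilon(x_i)\frac{B_iC_i(-1)}{q}B_i(t_i)\overline{B}_iC_i(1-t_i)$ does not disturb the equality under the standing hypotheses $t\in\mathbb{F}_q^\times\setminus\{1\}$; confirming this (alongside the already-noted vanishing of the $\delta$-term, which needs only $t\neq0$) is the one point where the clean identity really hinges, the remainder being bookkeeping with the multiplicativity of the characters.
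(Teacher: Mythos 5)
Your reduction is exactly the paper's own: expand the left-hand side by Definition \ref{def1}, interchange the $\theta$-sum with the sum over $(t_1,\ldots,t_n)$, and collapse the inner character sum with \eqref{greenevar}; your treatment of the tuples with $S:=1-x_1t_1-\cdots-x_nt_n\neq0$, including the remark that the $\delta$-term dies because $t\neq0$, is correct and matches the paper's computation. The gap is that your argument stops at what you yourself call the crux: you never verify that the locus $S=0$ is harmless, you only record that this ``needs care'' and remains to be confirmed. A proof whose decisive step is left as a task is incomplete, so as written the proposal does not establish the theorem.

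It is worth knowing that this deferred step cannot be completed, because the claim is false: comparing the two sides exactly as you set them up gives
\begin{align*}
\text{(right side)}-\text{(left side)}
=\overline{A}(-t)\left(\prod_{i=1}^{n}\epsilon(x_i)\frac{B_iC_i(-1)}{q}\right)
\sum_{\substack{t_1,\ldots,t_n\in\mathbb{F}_q\\ x_1t_1+\cdots+x_nt_n=1}}
\prod_{i=1}^{n}B_i(t_i)\overline{B}_iC_i(1-t_i),
\end{align*}
a Jacobi-sum-type quantity with no reason to vanish, and in general it does not. For instance, take $q=3$, $n=2$, $A=B_1=B_2=\phi$ the quadratic character, $C_1=C_2=\epsilon$, $x_1=x_2=1$, $t=2$. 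Every $F_A^{(2)}\left[A\theta;\phi,\phi;\epsilon,\epsilon\mid 1,1\right]$ vanishes, since the only tuple surviving the factors $\phi(t_i)\phi(1-t_i)$ is $(t_1,t_2)=(2,2)$, where $1-t_1-t_2=0$ and $\overline{A\theta}(0)=0$; hence the left-hand side is $0$. But the right-hand side equals $\phi(-1)\,F_A^{(2)}\left[\phi;\phi,\phi;\epsilon,\epsilon\mid 2,2\right]=(-1)\cdot\left(-\frac{1}{9}\right)=\frac{1}{9}$, which is precisely the displayed boundary term. The published proof conceals the same hole: the parenthetical ``noting that $x_1t_1+\cdots+x_nt_n\neq1$'' is used to apply the factorization $\overline{A}(S)\,\overline{A}(1-t/S)=\overline{A}(S-t)$ to every tuple, but on the locus $S=0$ the left member is $0$ (as $\overline{A}(0)=0$) while $\overline{A}(S-t)=\overline{A}(-t)\neq0$, and the concluding appeal to Definition \ref{def1} silently reinstates exactly those tuples. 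So your instinct about where the identity ``really hinges'' was correct; the statement itself only becomes true after subtracting the boundary term above from the right-hand side, or under additional hypotheses forcing that sum to vanish.
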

\begin{proof} Using \eqref{greenevar} and noting that $x_1t_1+\cdots+x_nt_n\neq1$, we have
\begin{align*}
\frac{q}{q-1}&\sum_{\theta\in\widehat{\mathbb F_q}}{A\theta\choose\theta}F_A^{(n)}\left[\begin{array}{cccc}
A\theta; &B_1, &\ldots, &B_n\\
&C_1, &\ldots, &C_n
\end{array}\vline~x_1,\ldots,x_n\right]\theta(t)\\
&=\frac{q}{q-1}\sum_{\theta\in\widehat{\mathbb F_q}}{A\theta\choose\theta}\sum_{t_1,\ldots, t_n\in\mathbb{F}_q}\left(\prod_{i=1}^{n}\epsilon(x_i)\frac{B_iC_i(-1)}{q}B_i(t_i)\overline{B}_iC_i(1-t_i)
\right)\\
&\hspace{6cm}\overline{A\theta}(1-x_1t_1-\cdots -x_nt_n)\theta(t)\\
&=\frac{q}{q-1}\sum_{t_1,\ldots, t_n\in\mathbb{F}_q}\left(\prod_{i=1}^{n}\epsilon(x_i)\frac{B_iC_i(-1)}{q}B_i(t_i)\overline{B}_iC_i(1-t_i)
\right)\\
&\hspace{2cm}\overline{A}(1-x_1t_1-\cdots -x_nt_n)\sum_{\theta\in\widehat{\mathbb F_q}}{A\theta\choose\theta}\theta\left(\frac{t}{1-x_1t_1-\cdots-x_nt_n}\right)\\
&=\sum_{t_1,\ldots, t_n\in\mathbb{F}_q}\left(\prod_{i=1}^{n}\epsilon(x_i)\frac{B_iC_i(-1)}{q}B_i(t_i)\overline{B}_iC_i(1-t_i)
\right)\overline{A}(1-t)\overline{A}\left(1-\frac{x_1t_1}{1-t}-\cdots-\frac{x_nt_n}{1-t}\right).
\end{align*}
Thus the result follows from Definition \ref{def1}.
\end{proof}
In the following theorem, we establish another generating function for $F_A^{(n)}$.
\begin{theorem}\label{gen2}
	Let $A,B_1,\ldots,B_n,C_1,\ldots,C_n\in\widehat{\mathbb{F}_q}$, $x_1,\ldots,x_n\in\mathbb F_q$, and $t\in\mathbb F_q^\times\backslash\lbrace1\rbrace$. Then
	\begin{align*}
	\frac{q}{q-1}&\sum_{\theta\in\widehat{\mathbb F_q}}{A\theta\choose\theta}F_A^{(n)}\left[\begin{array}{cccc}
	\overline{\theta}; &B_1, &\ldots, &B_n\\
	&C_1, &\ldots, &C_n
	\end{array}\vline~x_1,\ldots,x_n\right]\theta(t)\\
	&=\overline{A}(1-t)F_A^{(n)}\left[\begin{array}{cccc}
	A; &B_1, &\ldots, &B_n\\
	&C_1, &\ldots, &C_n
	\end{array}\vline~-\frac{x_1t}{1-t},\ldots,-\frac{x_nt}{1-t}\right].
	\end{align*}
\end{theorem}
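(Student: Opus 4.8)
The plan is to follow the template of the proof of Theorem~\ref{gen1}, expanding the left-hand side through Definition~\ref{def1} and then collapsing the sum over $\theta$ by means of the reduction formula \eqref{greenevar}. First I would write out the inner series $F_A^{(n)}\left[\overline{\theta};B_1,\ldots,B_n;C_1,\ldots,C_n\mid x_1,\ldots,x_n\right]$ using Definition~\ref{def1}; because the top parameter is now $\overline{\theta}$, the factor $\overline{A}(1-x_1t_1-\cdots-x_nt_n)$ appearing in the definition is replaced by $\theta(1-x_1t_1-\cdots-x_nt_n)$. Multiplying by $\theta(t)$ and using the multiplicativity of $\theta$ gives $\theta\left((1-x_1t_1-\cdots-x_nt_n)t\right)$, so after interchanging the finite sums over $\theta$ and over $t_1,\ldots,t_n$ the left-hand side becomes
\begin{align*}
\sum_{t_1,\ldots,t_n\in\mathbb F_q}&\left(\prod_{i=1}^n\epsilon(x_i)\frac{B_iC_i(-1)}{q}B_i(t_i)\overline{B}_iC_i(1-t_i)\right)\\
&\times\frac{q}{q-1}\sum_{\theta\in\widehat{\mathbb F_q}}{A\theta\choose\theta}\theta\left((1-x_1t_1-\cdots-x_nt_n)t\right).
\end{align*}

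Next I would evaluate the inner $\theta$-sum with \eqref{greenevar}, which yields $\frac{q}{q-1}\sum_{\theta}{A\theta\choose\theta}\theta(z)=\overline{A}(1-z)-\delta(z)$ for $z=(1-x_1t_1-\cdots-x_nt_n)t$. The $\delta$-term must be discarded: whenever $x_1t_1+\cdots+x_nt_n=1$ the corresponding summand already vanishes because $\theta(0)=0$, so it suffices to sum over those tuples with $x_1t_1+\cdots+x_nt_n\neq1$; for such tuples $z\neq0$ since $t\neq0$, whence $\delta(z)=0$. What survives is the factor $\overline{A}\left(1-(1-x_1t_1-\cdots-x_nt_n)t\right)=\overline{A}\left(1-t+t(x_1t_1+\cdots+x_nt_n)\right)$.

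Finally I would identify this expression with the right-hand side. Expanding $\overline{A}(1-t)\,F_A^{(n)}\left[A;\ldots\mid -\frac{x_1t}{1-t},\ldots,-\frac{x_nt}{1-t}\right]$ through Definition~\ref{def1}, the argument of $\overline{A}$ equals $1+\frac{t}{1-t}(x_1t_1+\cdots+x_nt_n)=\frac{1-t+t(x_1t_1+\cdots+x_nt_n)}{1-t}$, and multiplicativity of $\overline{A}$ factors this as $A(1-t)\,\overline{A}\left(1-t+t(x_1t_1+\cdots+x_nt_n)\right)$. The external $\overline{A}(1-t)$ times this $A(1-t)$ gives $\epsilon(1-t)=1$ since $t\neq1$, while $\epsilon\left(-\frac{x_it}{1-t}\right)=\epsilon(x_i)$ since $t\neq0,1$; hence the right-hand side reproduces, term by term, the expression obtained for the left-hand side, completing the proof.

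The only delicate points are bookkeeping rather than conceptual: tracking which $\epsilon$-factors survive the scaling $x_i\mapsto-\frac{x_it}{1-t}$, and justifying the removal of the $\delta$-term coming from \eqref{greenevar}. Both are handled exactly as in the proof of Theorem~\ref{gen1}, so I anticipate no substantial obstacle beyond careful manipulation of the character arguments.
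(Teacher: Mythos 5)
Your proposal takes exactly the route the paper itself takes: the paper's ``proof'' of Theorem~\ref{gen2} is a one-line pointer back to the proof of Theorem~\ref{gen1}, and what you write is precisely that argument adapted to the top parameter $\overline{\theta}$ --- expand by Definition~\ref{def1}, interchange the finite sums, collapse the $\theta$-sum via \eqref{greenevar}, and reassemble through Definition~\ref{def1}.

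There is, however, one step that does not hold term by term, and it is worth naming because it is a genuine gap --- one your write-up shares with the paper's own proof of Theorem~\ref{gen1}. After you discard the $\delta$-term, your expression for the left-hand side is a sum restricted to tuples with $x_1t_1+\cdots+x_nt_n\neq 1$ (tuples with sum equal to $1$ contribute $0$ because $\theta(0)=0$). The right-hand side, expanded through Definition~\ref{def1}, runs over \emph{all} tuples, and a tuple with $x_1t_1+\cdots+x_nt_n=1$ contributes
\begin{align*}
&\overline{A}(1-t)\,\overline{A}\left(\frac{1}{1-t}\right)\prod_{i=1}^{n}\epsilon(x_i)\frac{B_iC_i(-1)}{q}B_i(t_i)\overline{B}_iC_i(1-t_i)\\
&\hspace{2cm}=\prod_{i=1}^{n}\epsilon(x_i)\frac{B_iC_i(-1)}{q}B_i(t_i)\overline{B}_iC_i(1-t_i),
\end{align*}
which is in general nonzero. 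Already for $n=1$ and $x_1\neq 0,1$ (the statement does not exclude $n=1$), the right-hand side exceeds the left-hand side by the single nonzero term $\epsilon(x_1)\frac{B_1C_1(-1)}{q}B_1(x_1^{-1})\overline{B}_1C_1(1-x_1^{-1})$. So the ``term-by-term'' identification in your last step fails precisely on these tuples, and the identity as stated holds only up to a boundary correction, namely the sum of $\prod_{i=1}^{n}\epsilon(x_i)\frac{B_iC_i(-1)}{q}B_i(t_i)\overline{B}_iC_i(1-t_i)$ over all tuples with $x_1t_1+\cdots+x_nt_n=1$ --- exactly the kind of $\delta$-correction that Greene's transformation formulas carry explicitly. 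The paper's proof of Theorem~\ref{gen1} makes the same silent restriction (its phrase ``noting that $x_1t_1+\cdots+x_nt_n\neq1$'' applies to one side of the equality only), so this defect is inherited from the paper rather than introduced by you; but a genuinely complete proof must either carry the $\delta$-term through to the end or append the correction term to the statement of the theorem.
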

\begin{proof}
Following steps similar to the proof of Theorem \ref{gen1}, the result can be easily obtained.
\end{proof}
For $n=2$, we have a generating function for $F_2\left[\begin{array}{cccc}
A; &B_1, &B_2\\
&C_1, &C_2
\end{array}\vline~x_1,x_2\right]$, which appears to be new.
\begin{corollary}
Let $A,B,B',C,C'\in\widehat{\mathbb{F}_q}$, $x_1,x_2\in\mathbb F_q$, and $t\in\mathbb F_q^\times\backslash\lbrace1\rbrace$, then
\begin{align*}
\frac{q}{q-1}\sum_{\theta}{A\theta\choose\theta}&F_A^{(2)}\left[\begin{array}{ccc}
\overline{\theta}; &B, &B'\\
&C, &C'
\end{array}\vline~x_1,x_2\right]\theta(t)\\
&=\overline{A}(1-t)F_A^{(2)}\left[\begin{array}{cccc}
A; &B, &B'\\
&C, &C'
\end{array}\vline~-\frac{x_1t}{1-t},-\frac{x_2t}{1-t}\right].
\end{align*}
\end{corollary}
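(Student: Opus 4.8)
The plan is to imitate the proof of Theorem \ref{gen1} almost verbatim, the only genuine change being how the running character $\theta$ recombines. First I would expand the inner series on the left using Definition \ref{def1}. Because the top parameter is now $\overline{\theta}$, it enters the summand through the factor $\overline{\overline{\theta}}(1-x_1t_1-\cdots-x_nt_n)=\theta(1-x_1t_1-\cdots-x_nt_n)$, rather than through $\overline{A}\,\overline{\theta}$ as in Theorem \ref{gen1}. Writing $s:=x_1t_1+\cdots+x_nt_n$ for brevity, the left-hand side becomes
\begin{align*}
\frac{q}{q-1}\sum_{\theta\in\widehat{\mathbb F_q}}{A\theta\choose\theta}\sum_{t_1,\ldots,t_n\in\mathbb F_q}\left(\prod_{i=1}^{n}\epsilon(x_i)\frac{B_iC_i(-1)}{q}B_i(t_i)\overline{B}_iC_i(1-t_i)\right)\theta(1-s)\,\theta(t).
\end{align*}

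Next I would interchange the two summations and collapse the two $\theta$-factors. Here lies the one substantive difference from Theorem \ref{gen1}: since both factors carry $\theta$ (not $\theta$ and $\overline{\theta}$), they combine multiplicatively as $\theta(1-s)\theta(t)=\theta\bigl(t(1-s)\bigr)$, producing the argument $t(1-s)$ in place of the quotient $t/(1-s)$ seen before. Applying Greene's expansion \eqref{greenevar} to the inner character sum with $x=t(1-s)$ gives
\begin{align*}
\frac{q}{q-1}\sum_{\theta\in\widehat{\mathbb F_q}}{A\theta\choose\theta}\theta\bigl(t(1-s)\bigr)=\overline{A}\bigl(1-t(1-s)\bigr)-\delta\bigl(t(1-s)\bigr).
\end{align*}
I would then factor $1-t(1-s)=1-t+ts=(1-t)\bigl(1+\tfrac{t}{1-t}s\bigr)$, using $t\neq 1$, and note that $1+\tfrac{t}{1-t}s=1-\sum_{i=1}^{n}\bigl(-\tfrac{x_it}{1-t}\bigr)t_i$ together with $\epsilon\bigl(-\tfrac{x_it}{1-t}\bigr)=\epsilon(x_i)$ (valid since $t\in\mathbb F_q^\times\setminus\{1\}$). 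By multiplicativity of $\overline{A}$ the factor splits as $\overline{A}(1-t)\,\overline{A}\bigl(1-\sum_i(-\tfrac{x_it}{1-t})t_i\bigr)$, and reading the resulting sum back through Definition \ref{def1} yields exactly $\overline{A}(1-t)\,F_A^{(n)}\bigl[A;B_1,\ldots,B_n;C_1,\ldots,C_n\mid -\tfrac{x_1t}{1-t},\ldots,-\tfrac{x_nt}{1-t}\bigr]$, the claimed right-hand side.

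The step demanding the most care---and the main obstacle---is the disposal of the $\delta\bigl(t(1-s)\bigr)$ term arising from \eqref{greenevar}. Since $t\neq 0$, this term is supported on the boundary locus $s=1$; there the factor $\theta(1-s)=\theta(0)=0$ already kills every summand of the original sum (equivalently $\overline{A}(1)-\delta(0)=0$), so one must argue that the $s=1$ contribution is consistently zero and hence that the product manipulation is legitimately carried out only on the range $s\neq 1$, exactly as the parenthetical remark ``noting that $x_1t_1+\cdots+x_nt_n\neq 1$'' does in the proof of Theorem \ref{gen1}. Once this bookkeeping is settled, the remaining algebra---the factorization of $1-t(1-s)$ and the matching of $\epsilon$-supports---is routine, and the corollary for $n=2$ then follows by specialization.
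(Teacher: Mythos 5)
Your route is exactly the paper's: the corollary is the $n=2$ case of Theorem \ref{gen2}, which the paper proves by repeating the proof of Theorem \ref{gen1}; your one substantive modification (the top parameter $\overline{\theta}$ contributes $\theta(1-s)$, where $s=x_1t_1+\cdots+x_nt_n$, so the two $\theta$-factors combine into $\theta\bigl(t(1-s)\bigr)$ and the arguments rescale to $-x_it/(1-t)$) is the correct one, and your algebra --- the factorization $1-t(1-s)=(1-t)\bigl(1-\sum_i\bigl(-\tfrac{x_it}{1-t}\bigr)t_i\bigr)$ and the matching $\epsilon\bigl(-\tfrac{x_it}{1-t}\bigr)=\epsilon(x_i)$ --- is right.

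However, the step you yourself call the main obstacle is not settled by the argument you give, and this is a genuine gap (shared, in fairness, by the paper's own proof of Theorem \ref{gen1}). Write $P$ for the product $\prod_i\epsilon(x_i)\frac{B_iC_i(-1)}{q}B_i(t_i)\overline{B}_iC_i(1-t_i)$. Your manipulation correctly gives
\begin{align*}
\mathrm{LHS}=\sum_{t_1,\ldots,t_n}P\,\Bigl[\overline{A}\bigl(1-t(1-s)\bigr)-\delta\bigl(t(1-s)\bigr)\Bigr]=\sum_{\substack{t_1,\ldots,t_n\\ s\neq 1}}P\,\overline{A}(1-t+ts),
\end{align*}
but the claimed right-hand side, read back through Definition \ref{def1}, is the \emph{unrestricted} sum $\sum_{t_1,\ldots,t_n}P\,\overline{A}(1-t+ts)$, and on the stratum $s=1$ the factor $\overline{A}(1-t+ts)=\overline{A}(1)=1$ does not vanish. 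So the two sides differ by $\sum_{s=1}P$, a Jacobi-type sum that is generically nonzero: your observation $\overline{A}(1)-\delta(0)=0$ shows the $s=1$ terms are absent from the left, but it cannot remove them from the right, so restricting the manipulation to $s\neq 1$ does not by itself yield the stated identity. Concretely, for $n=2$, $q=3$, $A=B=B'=C=C'=\epsilon$, $x_1=x_2=1$, $t=-1$, the left-hand side of the corollary equals $0$ while the right-hand side equals $\tfrac{1}{9}$; in general, for $x_1=x_2=1$ the discrepancy is $\frac{BB'CC'(-1)}{q^{2}}J\bigl(B\overline{B'}C',\overline{B}B'C\bigr)$. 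The paper's parenthetical ``noting that $x_1t_1+\cdots+x_nt_n\neq1$'' in the proof of Theorem \ref{gen1} glosses over the identical point, so you have reproduced the paper's argument faithfully, gap included; a complete treatment would require showing the boundary sum vanishes (it does not in general), or amending the statement by a correction term supported on $x_1t_1+\cdots+x_nt_n=1$.
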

We finally deduce another generating function for $F^{(n)}_A$ over finite field in the following theorem.
\begin{theorem}\label{gen3}
Let $A,B_1,\ldots,B_n,C_1,\ldots,C_n\in\widehat{\mathbb{F}_q}$, $x_1,\ldots,x_n\in\mathbb F_q$ and $t\in\mathbb F_q^\times\backslash\lbrace1\rbrace$. For $1\leq k\leq n$, we have
\begin{align*}
\frac{q}{q-1}&\sum_{\theta\in\widehat{\mathbb F_q}}{B_k\overline{C_k}\theta\choose\theta}F_A^{(n)}\left[\begin{array}{cccccc}
A; &B_1, &\ldots, &B_k\theta, &\ldots &B_n\\
&C_1, &\ldots, &C_k, &\ldots, &C_n
\end{array}\vline~x_1,\ldots,x_n\right]\theta(t)\\
&\hspace{3cm}=\overline{B_k}(1-t)F_A^{(n)}\left[\begin{array}{cccc}
A; &B_1, &\ldots, &B_n\\
&C_1, &\ldots, &C_n
\end{array}\vline~x_1,\ldots\frac{x_k}{1-t},\ldots,x_n\right].
\end{align*}
\end{theorem}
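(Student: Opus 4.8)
The plan is to mirror the proofs of Theorems \ref{gen1} and \ref{gen2}: expand the inner Lauricella series via Definition \ref{def1}, interchange the $\theta$-summation with the summation over $t_1,\ldots,t_n$, and collapse the resulting character sum using \eqref{greenevar}. The new feature is that $\theta$ now sits inside the $k$-th upper parameter $B_k\theta$, so I would first isolate its contribution to the $k$-th factor of Definition \ref{def1}. Writing out that factor, $\epsilon(x_k)\frac{(B_k\theta)C_k(-1)}{q}(B_k\theta)(t_k)\overline{(B_k\theta)}C_k(1-t_k)$, and using multiplicativity, the $\theta$-dependent part separates cleanly as $\theta(-1)\,\theta(t_k)\,\overline{\theta}(1-t_k)=\theta\!\left(\frac{-t_k}{1-t_k}\right)$, leaving behind the ordinary $k$-th factor with parameter $B_k$.

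After interchanging sums, the bracketed $\theta$-sum to evaluate is $\frac{q}{q-1}\sum_{\theta}{B_k\overline{C_k}\theta\choose\theta}\theta(z)$ with $z=\frac{-t\,t_k}{1-t_k}$, obtained by combining the factor $\theta(t)$ with $\theta\!\left(\frac{-t_k}{1-t_k}\right)$. Applying \eqref{greenevar} with $A$ replaced by $B_k\overline{C_k}$ turns this into $\overline{B_k}C_k(1-z)-\delta(z)$. Here I would observe that the $\delta(z)$ term is harmless: $z=0$ forces $t_k=0$ (as $t\neq0$), and every summand still carries the outer factor $B_k(t_k)=B_k(0)=0$, so that term drops out. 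Simplifying $1-z=\frac{1-t_k(1-t)}{1-t_k}$ and multiplying by the leftover outer factor $\overline{B_k}C_k(1-t_k)$, the two occurrences of $\overline{B_k}C_k(1-t_k)$ cancel and leave exactly $\overline{B_k}C_k\bigl(1-t_k(1-t)\bigr)$ in the $k$-th slot.

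The final step is a change of variables $u_k=t_k(1-t)$, a bijection of $\mathbb{F}_q$ since $t\neq1$. Under it, $B_k(t_k)=B_k(u_k)\overline{B_k}(1-t)$, the character $\overline{B_k}C_k\bigl(1-t_k(1-t)\bigr)$ becomes $\overline{B_k}C_k(1-u_k)$, and the argument $x_kt_k$ of $\overline{A}$ becomes $\frac{x_k}{1-t}u_k$ (while the other variables and their factors are untouched, matching the unchanged slots on the right). Pulling the constant $\overline{B_k}(1-t)$ out of the sum and renaming $u_k$ back to $t_k$, Definition \ref{def1} identifies the remainder as $F_A^{(n)}$ with $k$-th variable $\frac{x_k}{1-t}$, which is the desired right-hand side. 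The only delicate points are the bookkeeping that isolates the $\theta$-contribution in the $k$-th factor and the verification that the $\delta$-term vanishes; everything else is routine character arithmetic.
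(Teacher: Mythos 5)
Your proposal follows the paper's own proof essentially step for step: expand the inner series by Definition~\ref{def1}, pull the $\theta$-dependence out of the $k$-th factor as $\theta(-t_k)\overline{\theta}(1-t_k)$, interchange the summations, collapse the $\theta$-sum via \eqref{greenevar}, merge the two $\overline{B}_kC_k$-factors, and finish with the substitution $t_k\mapsto t_k/(1-t)$. In one respect you are more careful than the paper: the published proof never mentions the $\delta$-term of \eqref{greenevar}, which you correctly dispose of at $t_k=0$ using $B_k(0)=0$.

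However, there is a genuine gap at $t_k=1$, and it is present in the paper's proof as well, so you have faithfully reproduced a flawed argument rather than closed it. On the left-hand side the terms with $t_k=1$ contribute nothing, because $\overline{\theta}(1-t_k)=\overline{\theta}(0)=0$ for every $\theta$ (the paper's convention puts $\chi(0)=0$ for all characters, including $\epsilon$). But the cancellation
\begin{align*}
\overline{B}_kC_k(1-t_k)\cdot\overline{B}_kC_k\left(\frac{1-t_k(1-t)}{1-t_k}\right)=\overline{B}_kC_k\bigl(1-t_k(1-t)\bigr)
\end{align*}
is legitimate only for $t_k\neq1$; at $t_k=1$ it replaces a term that is genuinely $0$ by the nonzero quantity $\overline{B}_kC_k(t)$. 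After your change of variables $u_k=t_k(1-t)$ this spurious term is exactly the $u_k=1-t$ term of the claimed right-hand side, which does not vanish in general. Hence your chain of equalities (and the paper's) establishes the identity only up to a boundary term: what is actually proved is that the left side equals the right side minus
\begin{align*}
\epsilon(x_k)\frac{B_kC_k(-1)}{q}\,\overline{B}_kC_k(t)\sum_{\substack{t_i\in\mathbb{F}_q\\ i\neq k}}\left(\prod_{\substack{i=1\\ i\neq k}}^{n}\epsilon(x_i)\frac{B_iC_i(-1)}{q}B_i(t_i)\overline{B}_iC_i(1-t_i)\right)\overline{A}\Bigl(1-\sum_{i\neq k}x_it_i-x_k\Bigr),
\end{align*}
which is generically nonzero for every $n\geq1$. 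This is not an artifact of bookkeeping: for $q=5$, $n=1$, $A=C_1=\phi$ the quadratic character, $B_1=\epsilon$, $x_1=2$, $t=2$, a direct computation gives $\frac15$ for the left side and $0$ for the right side, and their difference is exactly the displayed term. To make your argument rigorous you must carry the restriction $t_k\neq1$ through the final two steps, at which point you will prove the corrected identity above rather than the stated one; as written, your proof silently discards that restriction at the same place the paper does.
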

\begin{proof}
Using \eqref{greenevar}, we have
\begin{align*}
\frac{q}{q-1}&\sum_{\theta\in\widehat{\mathbb F_q}}{B_k\overline{C_k}\theta\choose\theta}F_A^{(n)}\left[\begin{array}{cccccc}
A; &B_1, &\cdots, &B_k\theta, &\cdots &B_n\\
&C_1, &\cdots, &C_k, &\cdots, &C_n
\end{array}\vline~x_1,\cdots,x_n\right]\theta(t)\\
&=\frac{q}{q-1}\sum_{\theta\in\widehat{\mathbb F_q}}{B_k\overline{C_k}\theta\choose\theta}\sum_{t_1,\ldots, t_n\in\mathbb{F}_q}\left(\prod_{i=1}^{n}\epsilon(x_i)\frac{B_iC_i(-1)}{q}B_i(t_i)\overline{B}_iC_i(1-t_i)
\right)\\
&\hspace{6cm}\overline{A}(1-x_1t_1-\cdots -x_nt_n)\theta(-t_k)\overline{\theta}(1-t_k)\\
&=\frac{q}{q-1}\sum_{t_1,\ldots, t_n\in\mathbb{F}_q}\left(\prod_{i=1}^{n}\epsilon(x_i)\frac{B_iC_i(-1)}{q}B_i(t_i)\overline{B}_iC_i(1-t_i)
\right)\\
&\hspace{4cm}\overline{A}(1-x_1t_1-\cdots -x_nt_n)\sum_{\theta\in\widehat{\mathbb F_q}}{B_k\overline{C_k}\theta\choose\theta}\theta\left(-\frac{tt_k}{1-t_k}\right)\\
&=\frac{q}{q-1}\sum_{t_1,\ldots, t_n\in\mathbb{F}_q}\left(\prod_{\substack{i=1\\i\neq k}}^{n}\epsilon(x_i)\frac{B_iC_i(-1)}{q}B_i(t_i)\overline{B}_iC_i(1-t_i)
\right)B_k(t_k)\overline{B}_kC_k(1-t_k)\\
&\hspace{4cm}\overline{A}(1-x_1t_1-\cdots -x_nt_n)\overline{B}_kC_k\left(1+\frac{tt_k}{1-t_k}\right)\\
&=\frac{q}{q-1}\sum_{t_1,\ldots, t_n\in\mathbb{F}_q}\left(\prod_{\substack{i=1\\i\neq k}}^{n}\epsilon(x_i)\frac{B_iC_i(-1)}{q}B_i(t_i)\overline{B}_iC_i(1-t_i)
\right)B_k(t_k)\\
&\hspace{4cm}\overline{A}(1-x_1t_1-\cdots -x_nt_n)\overline{B}_kC_k\left(1-t_k(1-t)\right).
\end{align*}
Replacing $t_k$ by $\frac{t_k'}{1-t}$, we obtain the desired result.
\end{proof}

\end{document}